\newcommand {\Ab} {\mathsf{Ab}}
\newcommand {\QCoh} {\mathsf{QCoh}}
\newcommand {\CAlg} {\mathsf{CAlg}}
\newcommand {\Fib} {\mathsf{Fib}}
\newcommand {\Alg} {\mathsf{Alg}}
\newcommand {\Mod} {\mathsf{Mod}}
\newcommand {\LMod} {\mathsf{LMod}}
\newcommand{\Ind}{\mathsf{Ind}}
\newcommand{\Tan}{\mathcal{T}an}
\newcommand{\Fun}{\mathsf{Fun}}
\newcommand{\Gerbe}{\mathcal{G}erb}
\newcommand{\heart}{\ensuremath\heartsuit}
\newcommand{\ZZ}{\mathbb{Z}}
\newcommand{\NN}{\mathbb{N}}
\newcommand {\Map} {\mathbf{Map}}
\newcommand {\Parf} {\mathsf{Perf}}
\newcommand {\IParf} {\widehat{\mathsf{Perf}}}
\newcommand {\OO} {\mathcal{O}}
\newcommand {\Spec} {\mathbf{Spec}}
\newcommand {\scat} {\s\text{-}\mathbf{Cat}}
\newcommand  {\St}     {\mathbf{St}}
\newcommand  {\Aff}     {\mathbf{Aff}}
\newcommand  {\colim}   {\mathrm{colim}}
\newcommand  {\Sym}   {\mathrm{Sym}}
\newcommand  {\LSym}   {\mathrm{LSym}}
\newcommand{\s}{\infty}
\newcommand{\mcat}{\s\text{-}\mathbf{Cat}^\mu}
\newcommand{\tcat}{\s\text{-}\mathbf{Cat}_{pr}^\Theta}
\newcommand{\TCAlg}{\Theta\text{-}\CAlg}
\theoremstyle{plain}
\newtheorem{thm}{Theorem}[section]
\newtheorem{prop}[thm]{Proposition}
\newtheorem{lem}[thm]{Lemma}
\newtheorem{cor}[thm]{Corollary}
\theoremstyle{definition}
\newtheorem{df}[thm]{Definition}
\newtheorem{rmk}[thm]{Remark}
\newtheorem{nota}[thm]{Notation}
\title{$\Theta$-Categories and Tannakian duality}
\author{Joost NUITEN, Bertrand TOEN}
\date{July 2025}
\begin{document}

\maketitle

\begin{abstract}
We introduce a notion of $\Theta$-categories, which is a refinement of the notion of 
symmetric monoidal $\s$-categories. We use this notion to prove a Tannakian duality statement, 
relating $\Theta$-categories with fpqc-stacks by means of 
a certain stack of fiber functors in the context of $\Theta$-categories. This provides,
over a base ring of arbitrary characteristic, 
a strong link between \emph{Tannakian $\Theta$-categories} and the schematic homotopy types
of \cite{chaff}.
\end{abstract}

\tableofcontents

\section*{Introduction}

The idea that homotopy theory can be approached from a Galois or Tannakian point of view goes back 
to Grothendieck. This perspective was first incarnated in his theory of the fundamental group, both pro-finite and
pro-algebraic (see for instance \cite{zbMATH03370468}, \cite{zbMATH00015170}). To the best of our knowledge, the fact 
that this point of view
on the fundamental group can be fruitfully extended to more general homotopy invariants appears
in two letters of Grothendieck: to Serre, dated 18.07.1959 for the pro-finite case (see \cite[p.~75]{zbMATH01652919}), 
and to Breen, dated 19.07.1975, for the pro-algebraic case (see \cite[cote 134-2, p.~75]{grotharchives}). 

In \cite{tan}, the second author suggested an approach to Tannakian duality for $\s$-categories based
on the notion of symmetric monoidal $\s$-categories defined as functors out of $\Gamma$, the category 
of pointed finite sets. In \cite{chaff} certain stacks, called \emph{schematic homotopy types},
have been identified as the expected duals to 
Tannakian $\s$-categories and have been used notably in the setting of non-abelian Hodge theory 
(see \cite{zbMATH05292808}). 
Thanks to the foundational work of Lurie, actual Tannaka duality statements have been 
proven by several authors, see for instance \cite{LurieTan}, \cite{arXiv:1204.5787} and 
\cite{zbMATH06902492}. However, except in the specific case of characteristic zero, 
these results are obtained in the setting of $E_\s$-algebras, which do not relate well to
the theory of schematic homotopy types of \cite{chaff}, and therefore do not provide 
the expected Tannakian interpretations for schematic homotopy types. \\

The starting point of this work is the simple observation that any notion
of Tannakian $\s$-category solely based on symmetric monoidal $\s$-categories will fail
to relate nicely to schematic homotopy types, or to stacks in algebraic geometry in general. Indeed, for any 
stack $F$ on the fpqc-site of affine schemes, 
the symmetric monoidal $\s$-category $\QCoh(F)$ of quasi-coherent sheaves
comes equipped with natural extra operations, namely the derived symmetric power operations. These  
cannot be reconstructed purely from the symmetric monoidal structure, and their 
existence is a theoretical obstruction for any Tannakian-type reconstruction result in algebraic geometry. 

A second incarnation of this discrepancy can be seen at the level of commutative algebras. The theory of schematic homotopy types from \cite{chaff} uses co-simplicial commutative rings, a sub-class of the $\LSym$-algebras (see e.g.~\cite{Joostsc, raksit2020}) typically used in derived, as opposed to spectral, algebraic geometry. In characteristic zero, these $\LSym$-algebras coincide with the $E_\s$-algebras that exist naturally in any symmetric monoidal $\s$-category, but have an additional algebraic structure in general.

The purpose of this short note is to introduce a rudimentary notion of \emph{$\Theta$-categories}, which is
a refinement of the notion of symmetric monoidal $\s$-categories suited for a Tannakian 
interpretation of schematic homotopy types and similar stacks. The point of view adopted here will be
somewhat minimalist and takes the second discrepancy mentioned above as its starting point. It is based on the principle that $\Theta$-categories should morally be symmetric monoidal $\s$-categories in which an internal notion of $\LSym$-algebras exists. 
We implement this idea by defining $\Theta$-categories simply as pairs $(T,M)$, consisting of
a symmetric monoidal $\s$-category and a monad $M$ over $T$ equipped with a map from the $E_\s$-monad and satisfying 
some conditions (see Definition \ref{d1-3}). The typical example to keep in mind is the case where $F$ is a stack, 
$T=\QCoh(F)$ is its $\s$-category of quasi-coherent complexes and $M$ is the $\LSym$-monad,
given by the direct sum of derived symmetric powers $E \mapsto \bigoplus_n \LSym^n(E)$. 
The main content of our work is that this definition gives rise to a well-behaved $(2,\s)$-category of $\Theta$-categories, from which a Tannakian duality theorem can 
be proven (see Theorem \ref{t1}). In particular, we show that any Tannakian $\Theta$-category $T$ is
equivalent, as a symmetric monoidal $\s$-category, to $\IParf(Fib^{\LSym}(T))$, the $\s$-category
of ind-perfect complexes on a certain stack of $\LSym$-fiber functors. The stacks obtained
as stacks of $\LSym$-fiber functors are moreover essentially the schematic homotopy types of \cite{chaff}. \\

To finish this introduction, we would like to mention that the present approach to $\Theta$-categories
is not optimal. The reader will for instance observe some limitations and 
unnatural conditions in our main result, Theorem \ref{t1}. This is the price to pay for our minimalist 
approach, but we mention some possible modifications and improvements in a final comment (see Remark \ref{r2}).
We also want to mention that a more systematic theory of $\Theta$-categories as symmetric monoidal $\infty$-categories with additional symmetric powers is presently under
investigation by the first author. We believe that the outcome of this work will allow
for stronger and more natural results in a near future. Nevertheless, Theorem \ref{t1} 
can be applied in several important situations to construct interesting Tannakian duals, such 
as the theory of Nori's motives (from which we can extract a motivic homotopy type defined
as a stack over $\Spec\, \ZZ$), or the theory of exponential complexes appearing in \cite{zbMATH05380327} (from 
which we can extract the \emph{exponential homotopy type} associated to an algebraic
variety). These specific schematic homotopy theories will be studied elsewhere.

\section{$\Theta$-Categories}

\subsection{$\mu$-Categories}

We let $\scat_{pr}^{R}$ be the $(2,\s)$-category of presentable $(1,\s)$-categories and 
right adjoints (i.e. limit preserving accessible $\s$-functors). We have the corresponding 
$(2,\s)$-category $\Fun(\Delta^1,\scat^{R}_{pr})$, of morphisms in $\scat_{pr}^{R}$. For any 
object $f \colon T' \to T$ in $\Fun(\Delta^1,\scat^R_{pr})$, we will denote by 
$f^* \colon T \to T'$ the left adjoint to $f$, and sometimes write $f_*$ instead of $f$.

\begin{df}\label{d1-1}
An object $f_* \colon T' \to T$ in $\Fun(\Delta^1,\scat^{R}_{pr})$ is a \emph{$\mu$-category} if 
$f_*$ is conservative and preserves sifted colimits. 
\end{df}

Note that, by the Barr--Beck theorem \cite[Thm.~4.7.0.3]{HA} the condition of being a $\mu$-category 
implies that the canonical factorization of $f_*$
$$T' \to \LMod_{M}(T) \to T,$$
induces an equivalence $T' \simeq \LMod_M(T)$. Here $M=f_*f^*$ is the monad on $T$ associated with the 
adjunction $(f^*, f_*)$, and $\LMod_{M}(T)$ is the $\s$-category of $M$-modules in $T$. 

\begin{df}\label{d1-2}
Let $f_* \colon T_1' \to T_1$  and $g_* \colon T_2' \to T_2$ be two $\mu$-categories in the sense of Definition \ref{d1-1}.
A \emph{$\mu$-morphism} from $T_1' \to T_1$ to $T_2' \to T_2$ consists of a commutative square
in $\scat$
$$\xymatrix{
T_1' \ar[r]^{u}  \ar[d]_-{f_*} & T_2' \ar[d]^-{g_*} \\
T_1 \ar[r]_{v} & T_2,
}$$
with $u$ and $v$ colimit preserving $\s$-functors, and
such that the induced natural transformation $g^* \circ v \Rightarrow u\circ f^*$ is an
equivalence. We will write
$$\mcat \subset \Fun(\Delta^1,\scat)$$
for the (non-full) sub-$(2,\s)$-category of $\mu$-categories and $\mu$-morphisms.
\end{df}
\begin{rmk}
Let us highlight the variance of the $\s$-functors in the above definition: we have
decided to use the \emph{left} adjoints $u$ and $v$, instead of their right adjoints
$u_*$ and $v_*$ to specify the variance. A $\mu$-morphism is therefore given by a commutative square as in Definition \ref{d1-2} that is \emph{right adjointable} in the sense of \cite[Def.~4.7.4.13]{HA}.
\end{rmk}
\begin{rmk}\label{r-1}
The condition on a $\mu$-morphism implies that we have several canonical 
equivalences between functors, obtained by taking various adjoints
$$g^*\circ v \simeq u\circ f^* \quad\qquad v\circ f_* \simeq g_* \circ u \quad\qquad v_*\circ g_* \simeq f_*\circ u_*.$$  
Using this, there are various equivalent ways of defining $\mu$-morphisms. For instance, a $\mu$-morphism from $T_1' \to T_1$ to $T_2' \to T_2$ can equivalently be encoded by a \emph{left adjointable} square in $\scat$ of the form
$$\xymatrix{
T_2' \ar[r]^{u_*}  \ar[d]_-{f_*} & T_1' \ar[d]^-{g_*} \\
T_2 \ar[r]_{v_*} & T_1,
}$$
This exhibits $\mcat\subset \Fun(\Delta^1,\scat^{R}_{pr})^{op}$ as a (non-full) sub-$(2, \s)$-category of the \emph{opposite} $(2, \s)$-category of morphisms in $\scat^R_{pr}$.
\end{rmk}

By definition, an object $T' \to T$ can also be written, up to equivalence, as
the forgetful $\s$-functor $\LMod_M(T) \to T$ for a monad $M$ on $T$, which furthermore
commutes with sifted colimits. Objects in $\mcat$ can therefore be identified, up to equivalence, with pairs $(T', M)$, with $T$ a presentable $\s$-category and $M$ a sifted colimit preserving 
monad on $T$. Morphisms
in $\mcat$ can then be viewed as 
$\s$-functors endowed with compatibility data with the monads; see \cite{MR4367222} for more details. This is our justification
for the terminology \emph{$\mu$-category}, where $\mu$ stands for \emph{monad}.

We will often present objects in $\mcat$ as pairs $(T, M)$ of a presentable $\s$-category $T$ 
and a sifted colimit preserving monad $M$ on $T$. The $\s$-category $T$ will be called the 
\emph{underlying $\s$-category} of the object $(T, M)$. The association $(T, M) \mapsto T$
defines an evident forgetful $(2,\s)$-functor
$$\mcat  \to \scat^L_{pr},$$
where $\scat_{pr}^L$ is the $(2,\s)$-category of presentable $\s$-categories and left adjoints as morphisms. 
The $(2,\s)$-functor above is also induced by the composition of the inclusion $\mcat \subset \Fun(\Delta^1,\scat)$ and the evaluation $\Fun(\Delta^1,\scat)\to \scat$ at the object $1 \in \Delta^1$.
The forgetful $(2,\s)$-functor $\mcat \to \scat_{pr}^L$ is easily seen to be conservative. Indeed, if 
$$\xymatrix{
T_1' \ar[r]^{u}  \ar[d]_-{f_*} & T_2' \ar[d]^-{g_*} \\
T_1 \ar[r]_{v} & T_2,
}$$
is a right adjointable square in $\scat$ and if $v$ is an equivalence, then $u$ is automatically 
an equivalence. The adjointability property implies that the unit and counit 
of the adjunction $(u,u_*)$
$$a \colon u \circ u_* \Rightarrow id \qquad b : id \Rightarrow u_* \circ u$$
are sent by $f_*$ and $g_*$ to the unit and counit of the adjunction $(v,v_*)$. 
As $v$ is an equivalence, these are equivalences in $T_1$ and $T_2$, and as $f_*$ and $g_*$
are conservative, $a$ and $b$ must be equivalences to. \\

We finish this first section by noticing that $\mcat$ receives a natural $(2,\s)$-functor
from the $(2,\s)$-category of presentable symmetric monoidal $\s$-categories. For this, we let 
$\scat^{\otimes}_{pr}$ be the $(2,\s)$-category of presentable symmetric monoidal $\s$-categories, 
and colimit preserving symmetric monoidal $\s$-functors as morphisms.
To each $T \in \scat^\otimes_{pr}$ we can associate the $\s$-category of commutative algebras in $T$, 
formally defined as $\CAlg(T) := \Fun^{lax-\otimes}(*,T)$, the $\s$-category of symmetric lax-monoidal 
$\s$-functors from the punctual $\s$-category (with its canonical symmetric monoidal
structure) to $T$. Evaluation at the unique object $*$ provides a well-defined
right adjoint $\s$-functor $\CAlg(T) \to T$. This construction extends to
a $(2,\s)$-functor
$$\scat^{\otimes}_{pr} \to \mcat; \quad T \mapsto (\CAlg(T) \to T)$$
from symmetric monoidal $\s$-categories to $\mu$-categories. To see this, note that each symmetric monoidal left adjoint $\s$-functor $u\colon T_1\to T_2$ defines a left adjoint in the $(2,\s)$-category of symmetric monoidal $\s$-categories and symmetric lax-monoidal $\s$-functors (see \cite[Cor.~7.3.2.7]{HA}). 
Composition with $(u, u_*)$ then defines an adjunction between $\s$-categories of commutative algebras, which commutes with the forgetful functors; this implies that $u$ induces a natural transformation from $\CAlg(T_1)\to T_1$ to $\CAlg(T_2)\to T_2$ that is right adjointable.

In terms of monads, one can think of $\scat^{\otimes}_{pr} \to \mcat$ as sending
a presentable symmetric monoidal $\s$-category $T$ to the pair $(T,M)$, consisting of the
underlying $\s$-category $T$ endowed with the $E_\s$-algebra monad 
$$\Sym_{E_\s}(x) = \coprod_{n\in \NN}(x^{\otimes n})_{h\Sigma_n}.$$

\subsection{$\Theta$-Categories}

Let $C$ be the $(2,\s)$-category defined by the following lax-cartesian square
$$\xymatrix{
\scat^{\otimes}_{pr} \ar[r]^-{q} \ar@{}[rd]^(0.3){}="s"^(0.7){}="t"\ar@{=>} "s";"t" & \Fun(\Delta^1, \scat_{pr}^{R})^{op} \\
C \ar[r] \ar[u] & \mcat. \ar[u]_-{p}
}$$
In this diagram, the top right corner is the $(2,\s)$-category of
all morphisms in $\scat_{pr}^R$, the $(2,\s)$-category of 
presentable $\s$-categories and right adjoints.
The top horizontal arrow $q$ arises from the canonical construction discussed at the end of the last section: 
it sends 
$T \in \scat_{pr}^\otimes$ to the pair $\CAlg(T) \to T$, and a symmetric monoidal $\s$-functor $u \colon T_1 \to T_2$ to its right adjoint 
$$\xymatrix{
\CAlg(T_2) \ar[r]^-{u_*} \ar[d] & \CAlg(T_1) \ar[d] \\
T_2 \ar[r]_-{u_*} & T_1.}$$
The morphism $p$ is the inclusion from Remark \ref{r-1}, sending a $\mu$-category $(T'\to T)$ to $T' \to T$, and a morphism $(u,v)$ to its
right adjoint square $(u_*, v_*)$.

Objects in $C$ can be thought of as triples $(T_1, (T_2'\to T_2), u)$, where $T_1$ is a presentable
symmetric monoidal $\s$-category, $(T_2' \to T_2)$ is a $\mu$-category and 
$u$ is a commutative diagram in $\scat_{pr}^{R}$ 
$$\xymatrix{
T_2' \ar[r] \ar[d] & \CAlg(T_1) \ar[d]  \\
T_2 \ar[r] &T_1 .
}
$$

\begin{df}\label{d1-3}
The \emph{$(2,\s)$-category of $\Theta$-categories} is the full sub-$(2,\s)$-category 
of the $(2, \s)$-category $C$ defined above consisting of the objects $(T_1,(T_2'\to T_2),u)$ that satisfy the following conditions.
\begin{enumerate}
    \item The induced (right adjoint) $\s$-functor $T_2 \to T_1$ is an equivalence.
    \item The induced (right adjoint) $\s$-functor $T_2'\to \CAlg(T_1)$
    commutes with arbitrary colimits.
\end{enumerate}
This $(2,\s)$-category is denoted by $\tcat$.
\end{df}

In more intuitive terms, objects in $\tcat$ are presentable symmetric monoidal $\s$-categories 
$T$ together with a monad $M$ on their underlying $\s$-category, as well as a
morphism of sifted colimit preserving monads $Sym_{E_\s} \to M$
satisfying condition $(2)$ from Definition \ref{d1-3}. Note that, because push-outs in 
$\CAlg(T)$ are given by tensor products, this condition has the following equivalent reformulations.
\begin{enumerate}
\item[(2')] For $x, y\in T$, the natural maps of $E_\infty$-algebras $\mathbf{1}_M\to M(\emptyset)$ and $M(x)\otimes M(y) \to M(x\amalg y)$ are equivalences.

\item[(2'')] The forgetful functor $\LMod_M(T)\to \CAlg(T)$ preserves push-outs, that is, push-outs in $\LMod_M(T)$ are compatible with tensor products in $T$.
\end{enumerate}
The equivalence of these conditions with condition (2) from Definition \ref{d1-3}
follows from the fact $\LMod_M(T)\to \CAlg(T)$ already preserves sifted colimits, so that it preserves all colimits if and only if it preserves finite coproducts of free algebras.

There is an obvious forgetful $(2,\s)$-functor
$$\tcat \to \scat^\otimes_{pr},$$
which forgets the monad $M$. Note that it is conservative, because its composition with the conservative
$\s$-functor $\scat^{\otimes}_{pr}\to \scat^{L}_{pr}$ forgetting the monoidal structure is equivalent to the
forgetful $\s$-functor sending a triple $(T_1, (T_2'\to T_2), u)$ to $T_2 \simeq T_1$.

On the other hand, any
symmetric monoidal $\s$-category $T$ can be sent to the object in $\tcat$ defined by taking $M=\Sym_{E_\s}$, 
or equivalently to the triple $(T, (\CAlg(T)\to T), id)$.
By construction, it is a fully faithful $(2,\s)$-functor and provides a full embedding
of $(2,\s)$-categories
$$\scat_{pr}^{\otimes} \longrightarrow \tcat.$$

\begin{rmk}\label{r-1-3}
The $(1,\s)$-category underlying $\tcat$ has small limits, 
and the forgetful functor $\tcat \to \scat$ preserves these. 
Indeed, the lax pull-back $C$ has limits, computed at the level of the underlying $\s$-categories, 
because the functors $q\colon \scat^{\otimes}_{pr}\to \Fun(\Delta^1, \scat_{pr}^{R})^{op}$ and
$p\colon \mcat \to \Fun(\Delta^1, \scat_{pr}^{R})^{op}$ both preserve limits.
For $q$, this uses the fact that $(T, \otimes)\mapsto \CAlg(T)$ preserves limits by \cite[Prop~2.2.4.9]{HA}. Since the conditions from Definition \ref{d1-2} are stable under limits, $\tcat\subset C$ is closed under limits.
\end{rmk}
\begin{nota}
A $\Theta$-category will be denoted symbolically by 
$T^\Theta$, and its underlying presentable symmetric monoidal
$\s$-category will be denoted by $T$. The \emph{$\Theta$-structure} of $T^\Theta$ is by definition
the extra right adjoint $T'\simeq \LMod_M(T) \to \CAlg(T) \to T$ participating in the data
of a $\Theta$-category. It will be symbolically denoted by 
$$\TCAlg(T^\Theta):=T' \to \CAlg(T)$$
and objects in $\TCAlg(T^\Theta)$ will be called \emph{$\Theta$-algebras}.
\end{nota}
The $\s$-category 
of $\Theta$-algebras comes equipped with a natural adjunction of presentable
$\s$-categories
$$F^\Theta_{E_\s} \colon \CAlg(T) \leftrightarrows \TCAlg(T^\Theta) : U_{E_\s}^{\Theta}.$$
The right adjoint $U_{E_\s}^{\Theta}$ is conservative and, more importantly, also a left adjoint, as it commutes with arbitrary colimits by assumption. 
In particular, it preserves
the initial object. The initial object in $\CAlg(T)$ is the monoidal unit $\mathbf{1} \in T$, which therefore
possesses a canonical compatible $\Theta$-algebra structure. The corresponding object
will still be denoted by $\mathbf{1} \in \TCAlg(T^\Theta)$. \\

We finish this part with a basic description of the $\s$-categories of 
morphisms in $\tcat$. Let $T_1^\Theta$ and $T_2^\Theta$ be two $\Theta$-categories
and $T_1$ and $T_2$ their underlying symmetric monoidal $\s$-categories. Recall that $\tcat$ is defined as a full sub-$(2,\s)$-category of $C$, the lax-fiber product below 
$$\xymatrix{
\scat^{\otimes}_{pr} \ar[r]^-{q} \ar@{}[rd]^(0.3){}="s"^(0.7){}="t"\ar@{=>} "s";"t" & \Fun(\Delta^1, \scat_{pr}^{R})^{op} \\
C \ar[r] \ar[u] & \mcat. \ar[u]_-{p}
}$$
Consequently, the $\s$-category $\Fun_{\tcat}(T_1^{\Theta}, T_2^{\Theta})$ of morphisms in $\tcat$ can be identified with a natural fiber product as well. More precisely, the mapping $\s$-category fits into a square of $\s$-categories
$$\xymatrix{
\Fun_{\tcat}(T_1^{\Theta}, T_2^{\Theta})\ar[r]\ar[d] & \Fun^R\big(\TCAlg(T_2^{\Theta}), \TCAlg(T_1^{\Theta})\big)\ar[d]\\
\Fun_{\scat^{\otimes}_{pr}}(T_1,T_2)\ar[r] & \Fun^R\big(\TCAlg(T_2^{\Theta}), \CAlg(T_1)\big).
}$$
Here the bottom functor sends a left adjoint symmetric monoidal functor $f^*\colon T_1\to T_2$ 
to the composite right adjoint 
$$
f_*\circ U^\Theta_{E_\infty}\colon \TCAlg(T_2^\Theta)\to \CAlg(T_2)\to \CAlg(T_1).
$$
The right functor sends a right adjoint $u_*\colon \TCAlg(T_2^\Theta)\to \TCAlg(T_1^\Theta)$
to the composite $U^{\Theta}_{E_\infty}\circ u_*$. 

\begin{lem}\label{l-3}
The above square exhibits $\Fun_{\tcat}(T_1^{\Theta}, T_2^{\Theta})$ as the full sub-$\s$-category 
of the fiber product on those tuples of a morphism $f^*\colon T_1\to T_2$ in $\scat_{pr}^{\otimes}$ and a right adjoint $u_*\colon \TCAlg(T_2^\Theta)\to \TCAlg(T_1^\Theta)$ 
such that the commutative square in $\scat_{pr}^R$
$$\xymatrix{
\TCAlg(T_2) \ar[d]_-{U^\Theta_{E_\infty}} \ar@{.>}[r]^-{u_*} & \TCAlg(T_1) \ar[d]^{U^\Theta_{E_\infty}} \\
\CAlg(T_2) \ar[r]_-{f_*} & \CAlg(T_1)
}$$
is left adjointable: the induced natural transformation $f^* \circ U^\Theta_{E_\infty} \Rightarrow U^\Theta_{E_\infty}\circ u^*$ is an equivalence.
\end{lem}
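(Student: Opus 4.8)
The plan is to compute $\Fun_{\tcat}(T_1^\Theta, T_2^\Theta)$ from the universal property of $C$ as a comma $(2,\s)$-category. Since $\tcat$ is a full sub-$(2,\s)$-category of $C$, this mapping $\s$-category agrees with $\Fun_{C}(T_1^\Theta, T_2^\Theta)$. Writing $d_i = \bigl(\TCAlg(T_i^\Theta)\to T_i\bigr)$ for the $\mu$-category underlying $T_i^\Theta$ — here condition $(1)$ of Definition~\ref{d1-3} is used to identify its base with $T_i$, so that the structure square of $T_i^\Theta$ has invertible bottom edge — the defining property of the lax pullback expresses $\Fun_{C}(T_1^\Theta, T_2^\Theta)$ as the pullback of $\s$-categories
$$\Fun_{\scat^{\otimes}_{pr}}(T_1,T_2)\;\times_{\mathcal{S}}\;\Fun_{\mcat}(d_1,d_2),$$
where $\mathcal{S}$ is the $\s$-category of commutative squares in $\scat^R_{pr}$ from $\bigl(\TCAlg(T_2^\Theta)\to T_2\bigr)$ to $\bigl(\CAlg(T_1)\to T_1\bigr)$ (the vertical arrows being the canonical forgetful functors), the first leg is induced by $q$ on mapping $\s$-categories followed by composition with the structure square of $T_2^\Theta$, and the second by $p$ followed by composition with the structure square of $T_1^\Theta$. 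Tracing through the definitions of $q$ and $p$ exactly as in the discussion preceding the lemma, the first leg sends $f^*$ to the square with top arrow $f_*\circ U^\Theta_{E_\infty}$ and bottom arrow $f_*$, and the second sends a $\mu$-morphism to the square with top arrow $U^\Theta_{E_\infty}\circ u_*$.

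I would then make two observations. First, by Remark~\ref{r-1}, $\Fun_{\mcat}(d_1,d_2)$ is the \emph{full} sub-$\s$-category of the $\s$-category of all commutative squares in $\scat^R_{pr}$ of the shape
$$\xymatrix{
\TCAlg(T_2^\Theta)\ar[r]\ar[d] & \TCAlg(T_1^\Theta)\ar[d] \\
T_2\ar[r] & T_1
}$$
(the vertical arrows being the canonical forgetful functors) spanned by the adjointable ones; this is a full sub-$\s$-category because adjointability of a square is a property of an object and not of a morphism. Second, the images of the two legs in $\mathcal{S}$ are honest commutative squares, so along either leg the bottom arrow is canonically $f_*$ and the commutativity datum at the level of underlying $\s$-categories is canonically determined by the one at the level of $\CAlg$; here one uses that a colimit preserving symmetric monoidal functor $f^*$ induces $\CAlg(f^*)$, which commutes with the forgetful functors to the underlying $\s$-categories. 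Consequently, recording only the top arrows of the squares identifies the pullback above with a \emph{full} sub-$\s$-category of the pullback $P$ of the square in the statement, namely that spanned by the tuples $(f^*,u_*)$ for which the associated commutative square of forgetful-to-underlying functors displayed above is adjointable. The bookkeeping one must do here is to check that the data discarded in this identification — the bottom arrows of the squares and their underlying-level coherences — form a contractible space once the $\CAlg$-level data is fixed, so that the comparison functor is genuinely fully faithful and not merely essentially surjective onto the adjointable locus.

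It remains to rewrite this adjointability condition in the form appearing in the statement. By Definition~\ref{d1-2} it says that the canonical transformation $F_2\circ f^* \Rightarrow u^*\circ F_1$ is an equivalence, where $F_i$ is the free $M_i$-module functor on $T_i$ and $u^*$ is the left adjoint of $u_*$. Factoring $F_i$ as the free $E_\infty$-algebra functor $\Sym_{E_\infty}$ followed by the free $\Theta$-algebra functor $F^\Theta_{E_\infty}$, and using that $\Sym_{E_\infty}$ commutes with $f^*$ (as $f^*$ is symmetric monoidal), this transformation is identified with $F^\Theta_{E_\infty}\circ \CAlg(f^*)\circ \Sym_{E_\infty} \Rightarrow u^*\circ F^\Theta_{E_\infty}\circ \Sym_{E_\infty}$; since both sides are colimit preserving and $\CAlg(T_1)$ is generated under colimits by the free $E_\infty$-algebras, it is an equivalence if and only if $F^\Theta_{E_\infty}\circ \CAlg(f^*) \Rightarrow u^*\circ F^\Theta_{E_\infty}$ is an equivalence of colimit preserving functors $\CAlg(T_1)\to \TCAlg(T_2^\Theta)$. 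Passing to the mate along the adjunctions $F^\Theta_{E_\infty}\dashv U^\Theta_{E_\infty}$, this is in turn equivalent to $\CAlg(f^*)\circ U^\Theta_{E_\infty} \Rightarrow U^\Theta_{E_\infty}\circ u^*$ being an equivalence, which — writing $f^*$ for $\CAlg(f^*)$ — is precisely the condition in the statement. This last step is the conceptual heart of the argument; the main obstacle I anticipate is the bookkeeping flagged above, namely to verify that passing from the comma-category pullback to $P$ loses only contractible data, so that the resulting comparison is a full embedding onto the adjointable locus rather than something weaker.
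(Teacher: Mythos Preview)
Your unraveling of the lax pullback in steps 1--4 is essentially what the paper does in one sentence (``Unravelling the definitions (using Remark~\ref{r-1})\dots''), and your care about the contractible bookkeeping is reasonable. The real divergence is in your step 5, and there is a genuine gap.

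You phrase the $\mu$-adjointability condition via Definition~\ref{d1-2} as ``$F_2\circ f^*\Rightarrow u^*\circ F_1$ is an equivalence'', factor through $\Sym_{E_\infty}$ to reduce to a transformation $\alpha\colon F^\Theta_{E_\infty}\circ f^*\Rightarrow u^*\circ F^\Theta_{E_\infty}$, and then write: ``Passing to the mate along $F^\Theta_{E_\infty}\dashv U^\Theta_{E_\infty}$, this is in turn equivalent to $f^*\circ U^\Theta_{E_\infty}\Rightarrow U^\Theta_{E_\infty}\circ u^*$ being an equivalence.'' But the mate correspondence is only a bijection of \emph{natural transformations}; it does not in general take equivalences to equivalences. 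You give no argument for why it does here, yet you flag this as ``the conceptual heart of the argument.'' Worse, once one tries to pin down exactly which $\alpha$ you mean (starting from the fiber-product datum $\phi\colon f_*U^\Theta_{E_\infty}\simeq U^\Theta_{E_\infty}u_*$), the most natural candidate --- the total left adjoint of $\phi$ --- is \emph{always} invertible, which would make your condition vacuous and contradict the lemma. So the mate step is not merely unjustified; as written it risks collapsing the condition entirely.

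The paper avoids this by staying on the right-adjoint side throughout. It takes the $\mu$-condition in the Remark~\ref{r-1} form, namely that the Beck--Chevalley map $f^*\circ U^\Theta\Rightarrow U^\Theta\circ u^*$ is an equivalence, factors $U^\Theta=U^{E_\infty}\circ U^\Theta_{E_\infty}$, uses that $f^*U^{E_\infty}\simeq U^{E_\infty}f^*$ (since $f^*$ is symmetric monoidal) to identify this transformation with $U^{E_\infty}$ applied to the transformation in the statement, and concludes by conservativity of $U^{E_\infty}$. This is a two-line argument with no mate gymnastics. If you want to salvage your route, you would need an independent reason why invertibility passes through that particular mate --- but at that point you are essentially reproving the paper's conservativity argument in disguise, so it is simpler to work with the forgetful functors from the start.
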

\begin{proof}
Let us write $U^{E_\infty}\colon \CAlg(T)\to T$ and 
$U^{\Theta}=U^{E_\infty}\circ U^{\Theta}_{E_\infty}$ for the forgetful functors. 
Unravelling the definitions (using Remark \ref{r-1}), one sees that $\Fun_{\tcat}(T_1^{\Theta}, T_2^{\Theta})$ is 
the full sub-$\infty$-category of the fiber product on the pairs $(f^*, u_*)$ such that
the natural transformation $f^*\circ U^\Theta\Rightarrow U^\Theta\circ u^*$ is an equivalence.
This natural transformation is equivalent to the composition
$f^*\circ U^{E_\infty}\circ U^{\Theta}_{E_\infty} \Rightarrow U_{E_\infty}\circ f^*\circ U^{\Theta}_{E_\infty}\Rightarrow U_{E_\infty}\circ U^{\Theta}_{E_\infty}\circ u^*$. The result follows because the first map is an equivalence and $U^{E_\infty}$ is conservative.
\end{proof}
Taking the fiber of $\Fun_{\tcat}(T_1^\Theta,T_2^\Theta) \to \Fun_{\scat^{\otimes}_{pr}}(T_1,T_2)$ over a fixed morphism $f \colon T_1 \to T_2$ in $\scat_{pr}^\otimes$, we obtain the following proposition.

\begin{prop}\label{p1-1}
For two $\Theta$-categories $T_1^\Theta$ and $T_2^\Theta$, and 
$f \colon T_1 \to T_2$ in $\scat_{pr}^{\otimes}$ a fixed symmetric monoidal $\s$-functor, 
we have a cartesian square
$$ \xymatrix{\ar[d] \Fun^{R, adj}_{/\CAlg(T_1)}(\TCAlg(T_2),\TCAlg(T_1)) 
\ar[r] & 
\Fun_{\tcat}(T_1^\Theta,T_2^\Theta) \ar[d] \\
\{*\} \ar[r]_{f} & \Fun_{\scat_{pr}^{\otimes}}(T_1,T_2),
}$$
where 
$$\Fun^{R, adj}_{/\CAlg(T_1)}(\TCAlg(T_2),\TCAlg(T_1))  \subset \Fun^{R}_{/\CAlg(T_1)}(\TCAlg(T_2),\TCAlg(T_1)) $$ 
is the full sub-simplicial set of right adjoint functors $u_*$ 
making the square from Lemma \ref{l-3} commute and left adjointable.
\end{prop}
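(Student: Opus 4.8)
The plan is to obtain Proposition \ref{p1-1} as a direct consequence of Lemma \ref{l-3} by taking a fiber of the outer square there over the fixed symmetric monoidal functor $f$. Lemma \ref{l-3} exhibits $\Fun_{\tcat}(T_1^\Theta, T_2^\Theta)$ as a full sub-$\s$-category of the pullback
$$\Fun_{\scat^\otimes_{pr}}(T_1,T_2) \times_{\Fun^R(\TCAlg(T_2^\Theta),\CAlg(T_1))} \Fun^R(\TCAlg(T_2^\Theta),\TCAlg(T_1^\Theta)),$$
cut out by the left-adjointability condition on the square from Lemma \ref{l-3}. Since pullbacks commute with pullbacks, taking the fiber over $\{*\}\xrightarrow{f}\Fun_{\scat^\otimes_{pr}}(T_1,T_2)$ yields that the fiber of $\Fun_{\tcat}(T_1^\Theta,T_2^\Theta)\to \Fun_{\scat^\otimes_{pr}}(T_1,T_2)$ over $f$ is the full sub-$\s$-category of
$$\{*\}\times_{\Fun^R(\TCAlg(T_2^\Theta),\CAlg(T_1))} \Fun^R(\TCAlg(T_2^\Theta),\TCAlg(T_1^\Theta))$$
spanned by those $u_*$ for which the square of Lemma \ref{l-3} is left adjointable. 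The point $\{*\}$ here picks out the right adjoint $f_*\circ U^\Theta_{E_\infty}\colon \TCAlg(T_2^\Theta)\to \CAlg(T_1)$, so this fiber product is exactly the $\s$-category of right adjoint functors $u_*\colon \TCAlg(T_2^\Theta)\to\TCAlg(T_1^\Theta)$ equipped with an identification $U^\Theta_{E_\infty}\circ u_* \simeq f_*\circ U^\Theta_{E_\infty}$ over $\CAlg(T_1)$, i.e.~the $\s$-category $\Fun^R_{/\CAlg(T_1)}(\TCAlg(T_2),\TCAlg(T_1))$.

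With this identification in hand, the only remaining task is bookkeeping: I would note that the commutativity datum for the square from Lemma \ref{l-3} is precisely the datum of a lift to $\Fun^R_{/\CAlg(T_1)}$, and that the extra left-adjointability condition singles out the full sub-simplicial set $\Fun^{R,adj}_{/\CAlg(T_1)}(\TCAlg(T_2),\TCAlg(T_1))$. Hence the left vertical map in the claimed square is the inclusion of this full sub-simplicial set, and the square is cartesian because the fiber of $\Fun_{\tcat}(T_1^\Theta,T_2^\Theta)\to \Fun_{\scat^\otimes_{pr}}(T_1,T_2)$ over $f$ has just been computed to be this full subcategory. One minor care point is that I want the strict fiber (honest fiber product with a point), not just the homotopy fiber up to equivalence, but since all the maps in sight are between quasi-categories and we are pulling back along a map from a point, the strict fiber product computes the homotopy fiber and no fibrant replacement issues arise; alternatively one fixes $f$ as a vertex and works simplicially throughout, which matches the ``full sub-simplicial set'' language of the statement.

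The main obstacle — really the only substantive step — is already absorbed into Lemma \ref{l-3}: one must be confident that the description of $\Fun_{\tcat}(T_1^\Theta,T_2^\Theta)$ as a pullback is compatible with passing to fibers, i.e.~that unwinding the lax-cartesian square defining $C$ commutes with restriction to a fixed object of $\scat^\otimes_{pr}$. This is where I would be most careful, since lax-cartesian (comma) constructions do not always interact naively with slicing; but here the relevant functor $q$ lands in the morphism $(2,\s)$-category and the lax-fiber over a fixed object is computed by an honest fiber of mapping $\s$-categories, so the compatibility holds. Everything else is formal manipulation of fiber products of $\s$-categories.
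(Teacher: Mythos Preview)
Your proposal is correct and follows exactly the paper's approach: the paper derives Proposition~\ref{p1-1} in one sentence by taking the fiber of the map $\Fun_{\tcat}(T_1^\Theta,T_2^\Theta)\to \Fun_{\scat^{\otimes}_{pr}}(T_1,T_2)$ over $f$ and invoking Lemma~\ref{l-3}, which is precisely what you do (with some additional care about strict versus homotopy fibers that the paper leaves implicit).
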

\begin{rmk}\label{r-1-2}
The $\s$-category $\Fun^R_{/\CAlg(T_1)}(\TCAlg(T_2),\TCAlg(T_1))$, and hence the fiber of $f$, is automatically an $\s$-groupoid because the forgetful $\s$-functor $\TCAlg(T_1) \to \CAlg(T_1)$
is conservative. 
\end{rmk}

\subsection{$\Theta$-Categories of modules}

To conclude this part on $\Theta$-categories, we will define a $\Theta$-category of left $B$-modules for a given $\Theta$-algebra $B \in \TCAlg(T^\Theta)$. To this end, we first consider
the underlying commutative algebra $B_0:=U_{E_\s}^{\Theta}(B)$ and the
corresponding symmetric monoidal $\s$-category $\LMod_{B_0}(T)$ of left $B_0$-modules
in $T$ endowed with its natural symmetric monoidal structure $\otimes_{B_0}$. It will 
be denoted by $B_0\text{-}\Mod(T)$. 

We define a $\Theta$-structure on $B_0\text{-}\Mod(T)$ by considering the composition of forgetful $\s$-functors
$$B/\TCAlg(T^\Theta) \longrightarrow B_0/\CAlg(T)\simeq \CAlg(B_0\text{-}\Mod(T)) \longrightarrow B_0\text{-}\Mod(T).$$
The above sequence of $\s$-functors defines an new $\Theta$-category whose underlying 
$\s$-category is the $\s$-category of $B_0$-modules and whose $\s$-category of $\Theta$-algebras
is $B/\TCAlg(T^\Theta)$, the $\s$-category of $\Theta$-algebras under $B$.

\begin{df}\label{d1-5}
We will refer to the $\Theta$-category constructed above 
$$B/\TCAlg(T^\Theta) \longrightarrow \CAlg(B_0\text{-}\Mod(T)) \longrightarrow B_0\text{-}\Mod(T)$$
as the \emph{$\Theta$-category of $B$-modules in $T$}, and denote it simply by 
$B\text{-}\Mod(T^\Theta)$.
\end{df}

The $\Theta$-category of $\Theta$-modules in $T^\Theta$ receives a natural morphism from $T^\Theta$
by tensoring with $B$. This natural morphism $T^\Theta \to B\text{-}\Mod(T^\Theta)$
can be represented by the following commutative diagram of $\s$-categories
$$\xymatrix{
\TCAlg(T^\Theta) \ar[r] \ar[d]_-{\coprod B} & \CAlg(T) \ar[d]^-{\otimes B_0} \ar[r] & 
T \ar[d]^-{\otimes B_0} \\
B/\TCAlg(T^\Theta)  \ar[r] & B_0/\CAlg(T) \ar[r] & B_0\text{-}\Mod(T).
}$$
Here the rows are right adjoints whereas the vertical morphisms are left adjoints, whose right adjoints (forgetting the maps from $B$ or $B_0$) commute with the horizontal functors.

We can now state the following proposition, which is an extension to the context of $\Theta$-categories
of the results \cite[Cor.~4.8.5.21]{HA} on symmetric monoidal $\s$-categories and commutative algebras.

\begin{prop}\label{p1-2}
Let $T^\Theta$ be a $\Theta$-category. The $\s$-functor sending a $\Theta$-algebra $B \in \TCAlg(T^\Theta)$
to $T^\Theta \to B\text{-}\Mod(T^\Theta)$ induces a 
fully faithful left adjoint $\s$-functor
$$\TCAlg(T^\Theta) \longrightarrow T^\Theta/\tcat.$$
\end{prop}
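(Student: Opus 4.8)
The plan is to leverage the analogous statement for commutative algebras, namely \cite[Cor.~4.8.5.21]{HA}, which asserts that $B \mapsto B\text{-}\Mod(T)$ gives a fully faithful left adjoint $\CAlg(T) \to T/\scat_{pr}^{\otimes}$, and to bootstrap from it using the fiber-product description of mapping $\s$-categories in $\tcat$ from Lemma \ref{l-3} and Proposition \ref{p1-1}. First I would check that the assignment $B \mapsto B\text{-}\Mod(T^\Theta)$ is indeed a functor $\TCAlg(T^\Theta) \to T^\Theta/\tcat$, which follows from functoriality of the three forgetful functors appearing in Definition \ref{d1-5} together with the base-change functoriality of $B_0 \mapsto B_0\text{-}\Mod(T)$; the natural morphism $T^\Theta \to B\text{-}\Mod(T^\Theta)$ is the commutative diagram exhibited just before the proposition.

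Next, to produce the left adjoint, I would observe that $\TCAlg(T^\Theta)$ has all colimits (it is presentable, being $\LMod_M(T)$) and that $T^\Theta/\tcat$ is a slice of a $(2,\s)$-category with colimits computed suitably; the candidate right adjoint sends an object $S^\Theta$ of $T^\Theta/\tcat$, presented by a morphism $f^* \colon T \to S$ together with the compatible right adjoint on $\Theta$-algebras, to the $\Theta$-algebra $f_*(\mathbf{1}_S) \in \TCAlg(T^\Theta)$ — here one uses that the unit $\mathbf{1}_S$ carries a canonical $\Theta$-algebra structure (noted in the excerpt after the definition of $\TCAlg$), and that $u_* \colon \TCAlg(S^\Theta) \to \TCAlg(T^\Theta)$ applied to it, or equivalently $f_* U^\Theta_{E_\infty}$ together with its lift, produces a $\Theta$-algebra over $T^\Theta$. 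I would verify the adjunction by comparing mapping spaces: $\Fun_{T^\Theta/\tcat}(B\text{-}\Mod(T^\Theta), S^\Theta)$ should be computed via the cartesian square of Proposition \ref{p1-1}, and on the commutative-algebra part it already matches $\Map_{B_0/\CAlg(T)}(B_0\text{-}\Mod(T), S)$ by \cite[Cor.~4.8.5.21]{HA}, which is the space of $B_0$-algebra structures on $f_*\mathbf{1}_S$; the extra $\Theta$-adjointability condition of Lemma \ref{l-3} then cuts this down to $\Theta$-algebra structures, i.e. to $\Map_{\TCAlg(T^\Theta)}(B, f_*\mathbf{1}_S)$.

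For full faithfulness I would argue that the counit of this adjunction is an equivalence, which reduces — again via the fiber-product description of Proposition \ref{p1-1} and the conservativity of the forgetful functor $\tcat \to \scat_{pr}^\otimes$ (and of $U^\Theta_{E_\infty}$) — to the full faithfulness already known at the level of commutative algebras, together with the observation that the $\Theta$-algebra structure on a module category $B\text{-}\Mod(T^\Theta)$ recovers $B$ on the nose: the initial $\Theta$-algebra of $B_0\text{-}\Mod(T)$ pulled back along $T^\Theta \to B\text{-}\Mod(T^\Theta)$ is $B$ itself, by construction of the $\Theta$-structure as $B/\TCAlg(T^\Theta) \to \CAlg(B_0\text{-}\Mod(T)) \to B_0\text{-}\Mod(T)$, whose initial object sitting over $B_0$ (the unit) corresponds to $B$ under $B/\TCAlg(T^\Theta) \to \TCAlg(T^\Theta)$.

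The main obstacle I anticipate is bookkeeping the compatibility between the two layers — the $E_\infty$-layer, where \cite[Cor.~4.8.5.21]{HA} does the work, and the $\Theta$-layer encoded by the monad $M$ — in a way that respects the fiber-product presentation of $\tcat$ and, crucially, the left-adjointability condition of Definition \ref{d1-3}(2), which is what guarantees $U^\Theta_{E_\infty}$ is itself a left adjoint and hence that the module construction behaves well with respect to colimits. Concretely, one must check that the canonical square relating $B\text{-}\Mod(T^\Theta)$ to $T^\Theta$ satisfies the adjointability of Lemma \ref{l-3}, which amounts to the statement that, for $B \in \TCAlg(T^\Theta)$, base change $\otimes_T B_0$ on $\Theta$-algebras is compatible with the forgetful functors to $\CAlg$ — and this is precisely the content of condition (2'') (push-outs in $\LMod_M(T)$ are compatible with tensor products), applied to $\LMod_{B_0}(T)$ in place of $T$. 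Once this compatibility is in place, the rest is a formal consequence of \cite[Cor.~4.8.5.21]{HA} and the universal property of the lax pull-back defining $C$.
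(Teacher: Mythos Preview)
Your strategy is essentially the paper's: reduce to \cite[Cor.~4.8.5.21]{HA} on the $E_\infty$-layer, use Proposition~\ref{p1-1} to analyze the fiber over a fixed $f$, and identify the right adjoint as $S^\Theta \mapsto f_*(\mathbf{1}_S)$. Two small points deserve attention. First, you write ``counit'' where you mean ``unit'': full faithfulness of a left adjoint is equivalent to the unit $B \to f_*(\mathbf{1}_{B\text{-}\Mod})$ being an equivalence, and indeed $f_*(\mathbf{1}) = B$ since $\TCAlg(B\text{-}\Mod(T^\Theta)) = B/\TCAlg(T^\Theta)$. Second, and more substantively, your assertion that ``the extra $\Theta$-adjointability condition of Lemma~\ref{l-3} then cuts this down to $\Theta$-algebra structures'' hides the actual mechanism. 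The paper makes this precise via a Yoneda-type fact: for a presentable $\s$-category $C$, the assignment $x \mapsto (x/C \to C)$ embeds $C$ fully faithfully into $(\scat_{pr}^R/C)^{op}$. Applied to $C = \TCAlg(T^\Theta)$, this identifies the space of right adjoints $u_*\colon \TCAlg(T_2^\Theta) \to B/\TCAlg(T^\Theta)$ over $\CAlg(T)$ (which is what Proposition~\ref{p1-1} produces) with maps $g(B) \to \mathbf{1}$ in $\TCAlg(T_2^\Theta)$, hence by adjunction with maps $B \to g_*(\mathbf{1})$. The adjointability condition is then dispatched exactly as you anticipate, using that $U^\Theta_{E_\infty}$ preserves pushouts. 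Once you insert this Yoneda step, your argument and the paper's coincide.
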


\begin{proof}
Unwinding the definitions, this is a consequence of the following two statements.
\begin{enumerate}
    \item The $\s$-functor sending a commutative algebra $B_0 \in \CAlg(T)$
to $T \to B_0\text{-}\Mod(T)$ induces a fully faithful left adjoint functor of $(2, \s)$-categories
$$\CAlg(T) \longrightarrow T/\scat_{pr}^{\otimes}.$$
    \item For any presentable $\s$-category $C$, the $\s$-functor
    $$C \to (\scat_{pr}^R/C)^{op}$$
    sending $x \in C$ to the comma $\s$-category $x/C$ with the projection $x/C \to C$, 
    and a morphism $u \colon y\to x$ to the composition $x/C \to y/C$ with $u$,
    defines a full embedding.
\end{enumerate}

Note that the $\infty$-category of maps $B\text{-}\Mod(T)\to T'$ in $T/\scat^\otimes_{pr}$ 
is in fact a space: 
any natural transformation $\mu\colon f\Rightarrow g$ between morphisms 
$f, g\colon B\text{-}\Mod(T)\to T'$ in $T/\scat_{pr}^{\otimes}$ is an equivalence.
Indeed, the class $B$-modules $M$ such that $\mu_M$ is an equivalence
is closed under colimits and contains the free $B$-modules.
In light of this, statement $(1)$ is proven in \cite[Thm.~4.8.5.11, Cor.~4.8.5.21]{HA}, 
whereas statement $(2)$ is simply a version of the Yoneda lemma. \\

Let $B\in \TCAlg(T^{\Theta})$ be a $\Theta$-algebra in $T^\Theta$ 
and $g\colon T^{\Theta}\to T_2^{\Theta}$ a $\Theta$-functor. 
Let $g_*(\mathbf{1})\in \TCAlg(T^{\Theta})$ be the image of $\mathbf{1}\in \TCAlg(T_2^{\Theta})$ 
under the right adjoint $g_*$. 
We then have the following commutative square of mapping spaces (rather than $\s$-categories, by Remark \ref{r-1-2})
$$\xymatrix{
\Map_{T^\Theta/\tcat}(B\text{-}\Mod(T^\Theta),T_2^{\Theta}) \ar[r] \ar[d] &
\Map_{\TCAlg(T^\Theta)}(B,g_*(\mathbf{1})) \ar[d] \\
\Map_{T/\scat_{pr}^\otimes}(B_0\text{-}\Mod(T),T_2) \ar[r] & \Map_{\CAlg(T)}(B_0,g_*(\mathbf{1})).}
$$
Here the top horizontal map sends a $\Theta$-functor 
$f\colon B\text{-}\Mod(T^{\Theta})\to T_2^{\Theta}$ to the unit map 
$B\to f_*(\mathbf{1})$ in $\TCAlg(B\text{-}\Mod(T^{\Theta}))$. 
This corresponds to a map $B\to g_*(\mathbf{1})$ in $\TCAlg(T^{\Theta})$ 
under the equivalence 
$$
\TCAlg(B\text{-}\Mod(T^{\Theta}))\simeq B/\TCAlg(T^{\Theta}).$$
The bottom horizontal map is defined similarly.

It now suffices to verify that the top horizontal map is an equivalence. 
Indeed, this shows that the functor $B\mapsto B\text{-}\Mod(T^{\Theta})$
 is a left adjoint. Taking $T_2^{\Theta}=B_2\text{-}\Mod(T^{\Theta})$, 
so that $g_*(\mathbf{1})=B_2$, shows that it is fully faithful.

By property (1) above, the bottom horizontal map is an equivalence. 
Let us therefore fix a symmetric monoidal $\s$-functor 
$f\colon B\text{-}\Mod(T)\to T_2$ under $T$, 
and compare the induced map between the fibers of the two vertical maps. 
By Proposition \ref{p1-1}, there is a canonical equivalence
between $\Map_{T^\Theta/\tcat}(B\text{-}\Mod(T^\Theta),T_2^\Theta)_f$ and the 
space of all possible limit preserving $\s$-functors $u$ filling up the diagram below
in $\scat^R_{pr}$
$$\xymatrix{
\ar@{.>}@/_4pc/[dd]_ -{u} \TCAlg(T_2^\Theta) \ar[d]_-{g_*} \ar[r] & \ar@/_3pc/[dd]_>>>>>>>{f_*} 
\CAlg(T_2) \ar[d]^-{g_*} \\
\TCAlg(T^\Theta) \ar[r]|(0.635)\hole & \CAlg(T) \\
B/\TCAlg(T^\Theta) \ar[r] \ar[u] & B_0/\CAlg(T) \ar[u] 
}$$
such that $f^*\circ U^{\Theta}_{E_\infty}\Rightarrow U^{\Theta}_{E_\infty}\circ u^*$ is an equivalence. 

Note that there is an equivalence of $\s$-categories
$$
\TCAlg(T_2^{\Theta})\times_{\TCAlg(T^{\Theta})} B/\TCAlg(T^{\Theta}) \simeq g(B)/\TCAlg(T_2^{\Theta})
$$
since $g\colon \TCAlg(T^\Theta)\to \TCAlg(T_2^{\Theta})$ is left adjoint to $g_*$, and similarly for commutative algebras. Property (2) recalled at the start of the proof therefore identifies the space of $u$ filling the diagram with the fiber at $f$ of the natural morphism
$$\Map_{\TCAlg(T_2^\Theta)}(g(B),\mathbf{1}) \to \Map_{\CAlg(T_2)}(g(B_0),\mathbf{1}).$$
By adjunction, this is equivalent to the fiber at $f$ of the natural morphism
$$\Map_{\TCAlg(T^\Theta)}(B,g_*(\mathbf{1})) \to \Map_{\CAlg(T)}(B_0,g_*(\mathbf{1})).$$
The result now follows by noting that for each map of $\Theta$-algebras $B\to g_*(\mathbf{1})$, the natural transformation $g_*(\mathbf{1})\otimes_{B_0} U^{\Theta}_{E_\infty}(-)\Rightarrow U^{\Theta}_{E_\infty}(g_*(\mathbf{1})\amalg_B -)$ is an equivalence since $U^{\Theta}_{E_\infty}$ preserves colimits.
\end{proof}

Proposition \ref{p1-2} can be usefully combined with the following recognition result.
For this, we note that a $\Theta$-functor $f \colon T_1^\Theta \to T_2^\Theta$ always induces
an adjunction on the level of the corresponding $\s$-categories of $\Theta$-algebras
$$f \colon \TCAlg(T_1^\Theta) \leftrightarrows \TCAlg(T_2^\Theta) : f_*$$
where $f$ and $f_*$ commute
with the forgetful $\s$-functors to $T_1$ and $T_2$. 
For any $B \in \TCAlg(T_1^\Theta)$, this yields
a well-defined $\Theta$-functor at the level of modules
$$f \colon B\text{-}\Mod(T_1^\Theta) \to f(B)\text{-}\Mod(T_2^\Theta).$$
Applying this to $B = f_*(\mathbf{1})$, and composing with the base change along the counit
$f(f_*(\mathbf{1})) \to \mathbf{1}$, we obtain a natural $\Theta$-functor 
$f_*(\mathbf{1})\text{-}\Mod(T_1^\Theta) \to T_2^\Theta$
whose underlying adjunction of $\s$-categories takes the form
$$
\mathbf{1}\otimes_{f(f_*(\mathbf{1}))} f(-)\colon f_*(\mathbf{1})\text{-}\Mod(T_1)\leftrightarrows T_2 : f_*.
$$
\begin{prop}\label{p1-3}
Let $T_1^\Theta \to T_2^\Theta$ be a $\Theta$-functor and $f \colon T_1 \to T_2$ the underlying 
symmetric monoidal $\s$-functor. We assume that the right adjoint $f_* \colon T_2 \to T_1$ of $f$ satisfies the following conditions.
\begin{enumerate}
    \item The $\s$-functor $f_*$ is conservative and preserves geometric realizations.
    \item For $y \in T_2$ and $x \in T_1$, the natural morphism
    $$x\otimes f_*(y) \to f_*(f(x) \otimes y)$$
    is an equivalence in $T_1$.
\end{enumerate}
Then the natural $\Theta$-functor
$$f_*(\mathbf{1})\text{-}\Mod(T_1^\Theta) \longrightarrow T_2^\Theta$$
is an equivalence of $\Theta$-categories.
\end{prop}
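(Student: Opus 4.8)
The plan is to prove the equivalence of $\Theta$-categories by checking it separately on the two layers of data that constitute a $\Theta$-category: the underlying symmetric monoidal $\s$-category, and the $\Theta$-structure (i.e.\ the right adjoint $\TCAlg \to \CAlg$). Since the forgetful functor $\tcat \to \scat^\otimes_{pr}$ is conservative, it suffices to prove that $f$ induces an equivalence on underlying symmetric monoidal $\s$-categories and, separately, that the induced map on $\s$-categories of $\Theta$-algebras is an equivalence compatible with this.

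\textbf{Step 1: the underlying symmetric monoidal equivalence.} Under hypotheses (1) and (2) of the proposition, the adjunction $\mathbf{1}\otimes_{f(f_*(\mathbf 1))} f(-)\colon f_*(\mathbf 1)\text{-}\Mod(T_1)\leftrightarrows T_2 : f_*$ is precisely the situation of \cite[Cor.~4.8.5.21]{HA}: condition (1) gives conservativity and preservation of geometric realizations by the forgetful-type functor, condition (2) is the projection formula that makes $f_*$ lax symmetric monoidal with the required base-change property, and together these force $f_*(\mathbf 1)\text{-}\Mod(T_1)\to T_2$ to be an equivalence of symmetric monoidal $\s$-categories. This handles the bottom layer without new work.

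\textbf{Step 2: the equivalence on $\Theta$-algebras.} By Lemma \ref{l-3} and Proposition \ref{p1-1}, once the underlying symmetric monoidal functor is fixed (here an equivalence), the remaining data of the $\Theta$-functor is a right adjoint $u_*\colon \TCAlg(T_2^\Theta)\to \TCAlg(f_*(\mathbf 1)\text{-}\Mod(T_1^\Theta)) = f_*(\mathbf 1)/\TCAlg(T_1^\Theta)$ lying over $f_*\colon \CAlg(T_2)\to f_*(\mathbf 1)/\CAlg(T_1)\simeq \CAlg(T_2)$ and satisfying left adjointability. Unwinding the module $\Theta$-structure from Definition \ref{d1-5}, this $u_*$ is the canonical functor $\TCAlg(T_2^\Theta)\to f_*(\mathbf 1)/\TCAlg(T_1^\Theta)$ sending $B_2$ to the unit $f_*(\mathbf 1)\to f_*(B_2)$; I must show it is an equivalence. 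The inverse is built from $f\colon \TCAlg(T_1^\Theta)\to\TCAlg(T_2^\Theta)$: a $\Theta$-algebra under $f_*(\mathbf 1)$, i.e.\ $A\in\TCAlg(T_1^\Theta)$ with $f_*(\mathbf 1)\to A$, goes to $\mathbf 1\otimes_{f(f_*(\mathbf 1))} f(A)$, the pushout in $\TCAlg(T_2^\Theta)$ of $f(A)\leftarrow f(f_*(\mathbf 1))\to \mathbf 1$. To see these are mutually inverse one uses that $U^\Theta_{E_\infty}$ preserves all colimits, so the $\Theta$-algebra pushout is computed as a relative tensor product on underlying objects, and then descends the statement to $\CAlg$ via the conservative forgetful functors: on underlying commutative algebras the corresponding adjunction $A_0\mapsto \mathbf 1\otimes_{f(f_*(\mathbf 1)_0)} f(A_0)$ is an equivalence by \cite[Cor.~4.8.5.21]{HA} applied to commutative algebras (this is exactly statement (1) in the proof of Proposition \ref{p1-2}), and conservativity of $\TCAlg(T_i^\Theta)\to\CAlg(T_i)$ upgrades this to $\Theta$-algebras once one checks the unit and counit of the $\Theta$-algebra adjunction map to those of the $\CAlg$-adjunction.

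\textbf{Main obstacle.} The delicate point is Step 2: verifying that the module $\Theta$-structure of Definition \ref{d1-5} is compatible with base change along $f(f_*(\mathbf 1))\to\mathbf 1$ in a way that exhibits the composite $f_*(\mathbf 1)\text{-}\Mod(T_1^\Theta)\to f(f_*(\mathbf 1))\text{-}\Mod(T_2^\Theta)\to T_2^\Theta$ as inducing the claimed functor on $\Theta$-algebras, together with the left adjointability condition of Lemma \ref{l-3}. The left adjointability amounts to the natural transformation $(\mathbf 1\otimes_{f(f_*(\mathbf 1))} f(-))\circ U^\Theta_{E_\infty}\Rightarrow U^\Theta_{E_\infty}\circ(\mathbf 1\otimes_{f(f_*(\mathbf 1))} f(-))$ being an equivalence, which holds because $U^\Theta_{E_\infty}$ commutes with the relevant colimits (as in the last sentence of the proof of Proposition \ref{p1-2}) and $f$ commutes with the forgetful functors by construction. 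Once this compatibility is in place, the two-layer strategy closes the argument: an equivalence on underlying symmetric monoidal $\s$-categories plus an equivalence on $\Theta$-algebras, both fitting in the defining square of $\tcat$, is exactly an equivalence of $\Theta$-categories.
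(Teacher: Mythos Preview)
Your Step 1 is exactly the paper's proof, and in fact the paper stops there: once the underlying symmetric monoidal $\s$-functor $f_*(\mathbf{1})\text{-}\Mod(T_1)\to T_2$ is an equivalence by \cite[Cor.~4.8.5.21]{HA}, conservativity of the forgetful $(2,\s)$-functor $\tcat\to\scat^\otimes_{pr}$ immediately implies that the $\Theta$-functor is an equivalence. Your Step 2 is therefore unnecessary, and your opening sentence contains a small logical slip: conservativity of $\tcat\to\scat^\otimes_{pr}$ means precisely that it suffices to check the equivalence on underlying symmetric monoidal $\s$-categories \emph{alone}; no separate check on $\Theta$-algebras is required. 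The extra verification you carry out in Step 2 is not wrong (and the mechanism you describe, reducing to $\CAlg$ via conservativity of $U^\Theta_{E_\infty}$ and using that this functor preserves colimits, is sound), but it duplicates what conservativity of the forgetful functor already guarantees. In short: your argument is correct but does strictly more work than needed; the paper's two-line proof is the streamlined version of your Step 1.
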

\begin{proof}
The underlying symmetric monoidal $\s$-functor 
$f_*(\mathbf{1})\text{-}\Mod(T_1) \to T_2$ is an equivalence by the second part of \cite[Cor.~4.8.5.21]{HA}. As forgetting the $\Theta$-structure $\tcat \to \scat_{pr}^{\otimes}$
is conservative, this implies the proposition.
\end{proof}

\section{Tannaka duality}

In this section, we fix a commutative ground ring $k$. We denote by $\St_k$ the 
$\s$-category of stacks on the big site of affine $k$-schemes endowed with the 
fpqc topology.

\subsection{$\Theta$-Categories of quasi-coherent and ind-perfect sheaves}

For a commutative $k$-algebra $A$, we consider $\QCoh(\Spec\, A)$, also denoted by $\QCoh(A)$,
the symmetric monoidal $\s$-category of complexes of $A$-modules. When $A$ varies in 
commutative $k$-algebras this defines an fpqc-stack in presentable symmetric monoidal $\s$-categories
$$\QCoh \colon \Aff_k^{op} \longrightarrow \scat_{pr}^\otimes.$$
We will need a slight modification of the stack $\QCoh$, which involves
ind-perfect complexes. 
For any small stack $F$, the dualizable objects in the symmetric monoidal $\s$-category $\QCoh(F)$ are the perfect complexes, 
whose $\s$-category will be denoted $\Parf(F)$.
It comes equipped with the restriction of the symmetric monoidal
structure of $\QCoh(F)$, and as such forms a small, stable symmetric monoidal $\s$-category. 

\begin{df}
The \emph{presentable symmetric monoidal $\s$-category of ind-perfect complexes on $F$}
is the ind-completion of $\Parf(F)$. It is denoted by 
$$\IParf(F):=\Ind(\Parf(F)).$$
\end{df}

For any small stack $F \in \St_k$ we have a natural adjunction of presentable symmetric monoidal $\s$-categories
$$\IParf(F) \leftrightarrows \QCoh(F),$$
where the left adjoint is the canonical extension by filtered colimits of the inclusion $\Parf(F)\subset \QCoh(F)$.
When $X=\Spec\, A$ is an affine $k$-scheme, or more generally a quasi-compact and quasi-separated $k$-scheme, then the perfect complexes are compact generators
of $\QCoh(X)$, and thus the above adjunction
is an equivalence. However, we warn the reader that for a general stack, even representable by a nice
Artin stack, perfect complexes might not even be compact objects in quasi-coherent complexes. In general, the natural $\s$-functor $\IParf(F) \to \QCoh(F)$ is therefore
neither fully faithful nor essentially surjective. \\

We want to enhance the two $\s$-functors $F \mapsto \IParf(F)$ and $F \mapsto \QCoh(F)$ from 
small stacks to $\scat_{pr}^\otimes$, to $\s$-functors to $\tcat$. 
This is straightforward for quasi-coherent complexes (see Remark \ref{r-2-1} below), 
but more involved for ind-perfect complexes because the (L)Sym-monad does not preserve perfect complexes. 
We will therefore
construct explicit models for these symmetric monoidal $\s$-categories with strict representatives
of the involved monads. A more intrinsic approach should follow from 
the techniques of \cite[\S 4.2]{raksit2020}. 

We let $X_*\colon I \to \Aff_k$ be a small diagram of affine $k$-schemes over $F$ such that 
the canonical morphism $\colim_{i\in I}X_i \to F$ is an equivalence. Dually, this yields
a diagram of (discrete) commutative $k$-algebras $A_* \colon I^{op} \to k\text{-}\CAlg^{\heart}$. This diagram can be
seen as a commutative algebra object in the ((1, 1)-) category of presheaves of abelian groups
$\Fun(I^{op},\Ab)$, so that we can consider the abelian category $A_*\text{-}\Mod$ of presheaves of $A_*$-modules.
Recall that an object $M$ in $A_*\text{-}\Mod$ consists of $A_i$-modules $M_i$ for all $i \in I$, 
and morphisms $u^* \colon A_j \otimes_{A_i}M_i \to M_j$ of $A_j$-modules for
all $u \colon j \to i$ in $I$, satisfying the
usual compatibilities with compositions and identity in $I$. The abelian category $A_*\text{-}\Mod$ possesses
enough projective objects: these are the direct sums of free $A_*$-modules $F(i)$ on objects $i \in I$,
characterized by the functorial bijection $\mathit{Hom}(F(i),M) \simeq M_i$. 
They can also be written more explicitly
as $F(i)(j) = \bigoplus_{j \to i}A_j$ for $j\in I$.

We let $scA_*\text{-}\Mod$ be the
category of simplicial-cosimplicial objects in the abelian category $A_*\text{-}\Mod$. As explained in \cite{Joostsc}
the category $scA_*\text{-}\Mod$ is endowed with a model category structure whose weak equivalences are 
induced by the totalization functor to presheaves of cochain complexes of abelian groups
$$Tot^\oplus \colon scA_*\text{-}\Mod \longrightarrow \Fun(I^{op},C(\ZZ)).$$
The fibrations are the levelwise fibrations of simplicial-cosimplicial abelian groups
of \cite[Defn.~5.2]{Joostsc}. A set of generating cofibrations is given by 
the family of morphisms $F(i)\otimes_\ZZ E \to F(i)\otimes_\ZZ E'$ where $i$ varies in $I$ and 
$E \to E'$ is a generating cofibration of the model category of simplicial-cosimplicial abelian groups (see
proof of \cite[Thm.~5.5]{Joostsc}). We note in particular that the evaluation 
functor at $i \in I$, $scA_*\text{-}\Mod \to scA_i\text{-}\Mod$ preserves cofibrations and trivial cofibrations.

Let $L(scA_*\text{-}\Mod)$ be the $\s$-category obtained from $scA_*\text{-}\Mod$ 
by inverting the weak equivalences. 
The $\s$-category $L(scA_*\text{-}\Mod)$ is presentable and closed symmetric monoidal, and is equivalent to the lax limit of the diagram
$i \mapsto \QCoh(A_i)$ (see e.g. \cite{Harpaz}). 
The limit of this diagram, $\QCoh(F)$, 
sits as a full symmetric monoidal sub-$\s$-category in $L(scA_*\text{-}\Mod)$: it consists precisely 
of objects $M$ such that for any morphism $j \to i$ in $I$ the induced morphism
$$A_j\otimes^{\mathbb{L}}_{A_i}M_i \to M_j$$
is an equivalence of simplicial-cosimplicial modules (i.e. a quasi-isomorphism on the corresponding $Tot^\oplus$).

Let us now consider the category $scA_*\text{-}\CAlg$ of strictly commutative simplicial-cosimplicial $A_*$-algebras,
together with its natural forgetful functor $scA_*\text{-}\CAlg \to scA_*\text{-}\Mod$.
We can lift the (semi-)model category structure along this forgetful functor, which now becomes
a right Quillen functor preserving all weak equivalences. 
It induces a right adjoint $\s$-functor between
presentable $\s$-categories
$$L(scA_*\text{-}\CAlg) \to L(scA_*\text{-}\Mod).$$
Similarly, if $\OO$ is a cofibrant model for the simplicial $E_\s$-operad we can 
consider $\OO$-algebras inside $scA_*\text{-}\Mod$, together with its natural (semi-)model category structure
lifted from the forgetful functor $scA_*\text{-}\Alg^\OO \to scA_*\text{-}\Mod$. The canonical projection
$\OO \to *$ to the terminal operad provides a factorization of (right Quillen) forgetful functors
$$scA_*\text{-}\CAlg \to scA_*\text{-}\Alg^\OO \to scA_*\text{-}\Mod.$$
\begin{lem}\label{l-2-1}
Upon inverting weak equivalences, the above sequence of right Quillen functors induces a sequence of right adjoints between presentable $\s$-categories
$$L(scA_*\text{-}\CAlg) \to L(scA_*\text{-}\Alg^\OO)\simeq \CAlg(L(scA_*\text{-}\Mod)) \to L(scA_*\text{-}\Mod).$$
that endows the presentable symmetric monoidal $\infty$-category $L(scA_*\text{-}\Mod)$ with the structure of a $\Theta$-category.
\end{lem}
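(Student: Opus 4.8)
The plan is to verify the two conditions from Definition \ref{d1-3} for the triple given by the sequence $L(scA_*\text{-}\CAlg) \to L(scA_*\text{-}\Alg^\OO) \to L(scA_*\text{-}\Mod)$, after first checking that this sequence actually lands in the $(2,\s)$-category $C$, i.e. that the middle term is $\CAlg$ of the symmetric monoidal $\s$-category $L(scA_*\text{-}\Mod)$ and that $L(scA_*\text{-}\CAlg) \to L(scA_*\text{-}\Mod)$ defines a $\mu$-category. For the identification $L(scA_*\text{-}\Alg^\OO) \simeq \CAlg(L(scA_*\text{-}\Mod))$, I would invoke the rectification theorem for algebras over an $E_\s$-operad in a nice monoidal model category (as in \cite[\S 4.1]{HA} or \cite{PavlovScholbach}), using that $\OO$ is a cofibrant model of the $E_\s$-operad and that $scA_*\text{-}\Mod$ is a sufficiently well-behaved (combinatorial, left proper, symmetric monoidal, satisfying the monoid axiom and with cofibrant unit up to the usual flatness conditions) model category; the generating cofibrations recalled above are of the form $F(i)\otimes_\ZZ(-)$, which makes the monoidal structure tractable. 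The compatibility of the right Quillen forgetful functors with the monadic structure is then formal.

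Next I would address condition (1): that the right adjoint $L(scA_*\text{-}\Mod) \to L(scA_*\text{-}\Mod)$ obtained from the composite — which is just the identity once we unravel that the "underlying $\s$-category" of the $\mu$-category is $L(scA_*\text{-}\Mod)$ itself and the $T_1$ in the triple is again $L(scA_*\text{-}\Mod)$ with its symmetric monoidal structure — is an equivalence. Here the content is really that the model $L(scA_*\text{-}\Mod)$ I am using for $\QCoh$-type data is the \emph{same} presentable symmetric monoidal $\s$-category on both sides, so condition (1) holds tautologically by construction. The point of the construction is precisely that $scA_*\text{-}\CAlg$ furnishes a \emph{strict} model of the $\LSym$-monad sitting over the $E_\s$-monad modeled by $\OO$-algebras.

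For condition (2) — equivalently (2') or (2'') — I would show that the forgetful functor $L(scA_*\text{-}\CAlg) \to \CAlg(L(scA_*\text{-}\Mod))$ preserves pushouts, or concretely that for $x,y$ the maps $\mathbf 1 \to \LSym(\emptyset)$ and $\LSym(x)\otimes\LSym(y)\to\LSym(x\amalg y)$ are equivalences. Since everything is computed levelwise and objectwise over $i\in I$, and since the model structure on $scA_*\text{-}\Mod$ has generating cofibrations built from the projective generators $F(i)$, it suffices to check this on free $A_*$-modules, where it reduces to the classical statement that for a simplicial-cosimplicial commutative ring the free strictly commutative algebra functor $\Sym$ sends coproducts of modules to tensor products of algebras — i.e. $\Sym(M\oplus N)\cong\Sym(M)\otimes\Sym(N)$ strictly, and $\Sym(0)$ is the unit — combined with the fact that $\Sym$ is left Quillen, hence its derived functor $\LSym$ computes $L(scA_*\text{-}\CAlg)$ on cofibrant objects and preserves the relevant homotopy colimits. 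One also uses, as noted in the excerpt, that $\LMod_M(T)\to\CAlg(T)$ preserves sifted colimits automatically, so only finite coproducts of free algebras need to be examined.

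The main obstacle I expect is the rectification/monadicity bookkeeping in the first paragraph: verifying that $scA_*\text{-}\Mod$ satisfies the hypotheses needed for both the existence of the transferred (semi-)model structures on $\OO$-algebras and on strictly commutative algebras \emph{and} for the rectification equivalence $L(scA_*\text{-}\Alg^\OO)\simeq\CAlg(L(scA_*\text{-}\Mod))$ to hold, and that the forgetful functor $L(scA_*\text{-}\CAlg)\to L(scA_*\text{-}\Mod)$ is conservative and preserves sifted colimits (so that it genuinely defines a $\mu$-category). The sifted-colimit preservation follows because the forgetful functor is right Quillen along a transfer and the free algebra functor is a left adjoint built from $\Sym$, which commutes with filtered colimits and reflexive coequalizers levelwise; conservativity is similarly levelwise. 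Once this infrastructure is in place, conditions (1) and (2) are comparatively short.
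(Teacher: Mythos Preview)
Your proposal is correct and follows essentially the same outline as the paper's proof: verify the $\mu$-category conditions (conservativity and sifted-colimit preservation of the forgetful functors), establish the middle equivalence $L(scA_*\text{-}\Alg^\OO)\simeq\CAlg(L(scA_*\text{-}\Mod))$, and check condition (2) via the fact that coproducts of (strictly) commutative algebras are tensor products.

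The one noteworthy difference is in how the middle equivalence is obtained. You propose invoking an operadic rectification theorem (\`a la \cite[\S4.1]{HA} or Pavlov--Scholbach), which requires verifying a list of technical hypotheses on the model category $scA_*\text{-}\Mod$ --- you correctly flag this as the main obstacle. The paper instead argues via Barr--Beck: once one knows the forgetful functor $L(scA_*\text{-}\Alg^\OO)\to L(scA_*\text{-}\Mod)$ is conservative and preserves geometric realizations (the latter because realizations are computed by the simplicial diagonal), monadicity identifies $L(scA_*\text{-}\Alg^\OO)$ with modules over the associated monad, which is manifestly the $E_\infty$-monad. This sidesteps the model-categorical verification you anticipate and is somewhat cleaner, though your route also works. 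For sifted-colimit preservation of the monads the paper simply cites \cite[Prop.~5.17]{Joostsc}, whose proof is essentially the levelwise argument you sketch.
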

\begin{proof}
On localizations, we obtain a sequence of conservative right adjoints that furthermore preserve geometric realizations, since these can be computed by the diagonal in the simplicial direction. In particular, $L(scA_*\text{-}\CAlg)$ and $L(scA_*\text{-}\Alg^\OO)$ can be identified with $\s$-categories of modules for two monads on $L(scA_*\text{-}\Mod)$. In the second case, this is the $E_\infty$-monad and we obtain the middle equivalence. Both monads preserve sifted colimits by (the proof of) \cite[Prop.~5.17]{Joostsc}, so that the forgetful $\s$-functors preserve sifted colimits as well. Finally, the forgetful functor $L(scA_*\text{-}\CAlg) \to L(scA_*\text{-}\Alg^\OO)\simeq \CAlg(L(scA_*\text{-}\Mod))$ also preserves coproducts, which are computed by (derived) tensor products.
\end{proof}
Finally, we define a $\Theta$-structure on $\QCoh(F)$ simply by pull-back along the inclusion $\QCoh(F) \subset
L(scA_*\text{-}\Mod)$
$$\xymatrix{
L(scA_*\text{-}\CAlg) \ar[r] &  \CAlg(L(scA_*\text{-}\Mod)) \ar[r] &  L(scA_*\text{-}\Mod) \\
\TCAlg(\QCoh(F)) \ar[r] \ar[u] & \CAlg(\QCoh(F)) \ar[r] \ar[u] & \QCoh(F), \ar[u]
}$$
where each square above is cartesian. Using Lemma \ref{l-2-1} and the fact that $\QCoh(F)\subset L(scA_*\text{-}\Mod)$ is closed under colimits, one sees that this indeed defines a $\Theta$-structure. We will write
$\QCoh^{\LSym}(F)$ for this $\Theta$-structure and refer to it as the \emph{LSym $\Theta$-structure}. We leave it to the reader to construct
functorialities in $F$ to provide a well defined $\s$-functor $F \mapsto \QCoh^{\LSym}(F)$ 
from small stacks to $\Theta$-categories.
\begin{rmk}\label{r-2-1}
One can also give a more intrinsic description of $\QCoh^{\LSym}(F)$, as follows. Let $\LSym\text{-}\CAlg(k)$ denote the $\s$-category of $\LSym$-algebras over $k$, that is, algebras for the $\LSym$-monad on $\QCoh(k)$ induced by the symmetric algebra monad on flat $k$-modules by right-left extension (see \cite{Joostsc,raksit2020}). This comes with a forgetful functor $\LSym\text{-}\CAlg(k)\to k\text{-}\CAlg$ to $E_\infty$-algebras over $\ZZ$ that preserves limits and colimits (sifted colimits and tensor products).

For each discrete $k$-algebra $A$, we then have a natural sequence of right adjoint forgetful functors
$$
A/\LSym\text{-}\CAlg(k)\to A/k\text{-}\CAlg\to A\text{-}\Mod(\QCoh(k))\simeq \QCoh(A)
$$
depending functorially on $A$. Since the first functor preserves colimits and $A/k\text{-}\CAlg\simeq \CAlg(\QCoh(A))$ by \cite[Cor.~3.4.1.7]{HA}, this determines a $\Theta$-structure on $\QCoh(A)$, equivalent to the LSym $\Theta$-structure considered above. Using this, we obtain a functor $\QCoh^{\LSym}\colon \Aff_k^{op}\to \tcat$ satisfying fpqc descent (since $\QCoh$ does). For each cardinal $\kappa$, we then define
$$
\QCoh^{\LSym}\colon \St_k^{\kappa-small, op} \to \tcat
$$
to be the unique extension preserving $\kappa$-small limits (using Remark \ref{r-1-3}).
\end{rmk}

We now turn to the case of ind-perfect complexes. For the $\Theta$-structure on $\IParf(F)$, we will need a slight modification of the previous construction. For this, 
we let $\mathbb{P} \subset scA_*\text{-}\Mod$ be the full sub-category of objects $M$ satisfying the following 
conditions.
\begin{enumerate}
    \item The object $M$ is cofibrant.
    
    \item For any $i \in I$, $Tot^\oplus(M_i)$ defines a compact object in $\QCoh(A_i)$ and moreover
    for any $i \to j$ the induced morphism $A_j \otimes_{A_i}M_i \to M_j$ is an equivalence
    of simplicial-cosimplicial $A_j$-modules.
\end{enumerate}

The localization of $\mathbb{P}$ along its weak equivalences is equivalent, via the canonical inclusion, to $\Parf(F) \subset \QCoh(F) \subset L(scA_*\text{-}\Mod)$.

For any integer $n\geq 0$ we have an endofunctor
$\Theta_n \colon \mathbb{P} \to \mathbb{P}$ sending $M$ to its genuine $n$-th symmetric power $(M^{\otimes n})_{\Sigma_n}$.  This defines an $\NN$-graded object in the category $\Fun(\mathbb{P},\mathbb{P})$, that is,
a functor
$$\Theta_* \colon \NN \to \Fun(\mathbb{P},\mathbb{P}),$$
where $\NN$ is the discrete category of natural numbers. The functor $\Theta_*$ is moreover an
$\NN$-graded monad, or equivalently, $\Theta_* \colon \NN \to \Fun(\mathbb{P},\mathbb{P})$ admits 
a natural structure of a lax monoidal functor, where the monoidal structure on $\NN$ is given by
the multiplication and the monoidal structure on $\Fun(\mathbb{P},\mathbb{P})$ is the composition of endo-functors.
The lax monoidal structure is given by the natural morphisms
$$\Theta_m(\Theta_n(M_*)) = ((M_*^{\otimes n})_{\Sigma_n})^{\otimes m}_{\Sigma_m} \simeq (M_*^{\otimes nm})_{\Sigma_n \ltimes \Sigma^{\times n}_m} \longrightarrow
(M^{\otimes nm}_*)_{\Sigma_{nm}}$$
induced by the canonical map $\Sigma_n \ltimes \Sigma^{\times n}_m \to \Sigma_{nm}$. The graded
monad $\Theta_*$ will be denoted by $\LSym^*$.

Similarly, any cofibrant simplicial operad $\OO$ such that $\OO(k)_{h\Sigma_k}$ has 
the homotopy type of a finite CW complex gives rise to an $\NN$-graded monad
$$\Sym^*_\OO \colon \NN \to \Fun(\mathbb{P},\mathbb{P})$$
by sending $n$ to $(\OO(n)\otimes (-)^{\otimes n})_{h\Sigma_n}$. The finiteness condition 
on $\OO$ implies that each endofunctor $(\OO(n)\otimes (-)^{\otimes n})_{h\Sigma_n}$
preserves the sub-category $\mathbb{P}$. The lax monoidal structure on $\Sym^*_\OO$ is the usual 
one and endows $\Sym^*_\OO$ with the structure of a graded monad over $\mathbb{P}$. Moreover, the canonical 
projection $\OO \to *$ to the terminal simplicial operad (which is not cofibrant anymore), produces
a canonical morphism of graded monads 
$$\Sym^*_\OO \to \LSym^*.$$

We now apply this to cofibrant models for the $E_n$-operads, and obtain a sequence
of morphisms of graded monads on $\mathbb{P}$
\begin{equation}\tag{S} Id = \Sym_{E_0}^*\to \dots \to \Sym_{E_n}^* \to \Sym^*_{E_{n+1}} \to \dots \to \LSym^*.\end{equation}
These monads preserve the weak equivalences on $\mathbb{P}$, so that they induce a sequence of morphisms of graded monads on the localization $\Parf(F)$. We thus obtain a sequence (S) in the $\s$-category of lax monoidal $\s$-functors
$$\NN \to \Fun(\Parf(F),\Parf(F)),$$
from the multiplicative monoid $\NN$ to the monoidal $\s$-category of endo-$\s$-functors
of $\Parf(F)$. Now recall that there is a canonical full monoidal embedding 
$\Fun(\Parf(F),\Parf(F)) \hookrightarrow \Fun(\IParf(F),\IParf(F))$, whose
essential image consists of filtered colimit preserving $\s$-functors that also preserve
compact objects. We can therefore identify the sequence (S) with a sequence 
in the $\s$-category of graded monads on $\IParf(F)$. Taking the sum over $\NN$, we
get a sequence of monoids in $\Fun(\IParf(F),\IParf(F))$
$$Id \to \dots \to \Sym_{E_n} \to \Sym_{E_{n+1}} \to \dots \to \LSym$$
whose colimit provides the desired morphism of monads on $\IParf(F)$
$$\Sym = \colim_n \Sym_{E_n} \to \LSym.$$
Note that $\Sym$ is identified with the $E_\s$-monad, and therefore the $\s$-categories of algebras
and their forgetful $\s$-functors
$$\LMod_{\LSym}(\IParf(F)) \to \LMod_{\Sym}(\IParf(F)) \to \IParf(F)$$
defines a $\Theta$-structure on $\IParf(F)$.

We have thus constructed
a $\Theta$-category, denoted by $\IParf{}^{\LSym}(F)$, with underlying symmetric monoidal $\s$-category 
equivalent to $\IParf(F)$. We leave it to the reader that this can be made functorial in $F$ and provides
a well defined $\s$-functor
$$\IParf{}^{\LSym}(-) : \St^{small}_k \to \tcat$$
from small stacks to $\Theta$-categories. 
The canonical functor $\IParf(F)\to \QCoh(F)$ can be lifted to a morphism 
of $\Theta$-categories $$\IParf{}^{\LSym}(F) \to \QCoh^{\LSym}(F).$$
Note that this comparison map is an equivalence whenever the stack $F\simeq \Spec\; A$ is affine.
\begin{df}
Let $F$ be a small stack over $k$. We define the
\emph{LSym $\Theta$-categories of quasi-coherent} and \emph{ind-perfect complexes}
on $F$ to be the $\Theta$-categories $\QCoh^{\LSym}(F)$ and $\IParf{}^{\LSym}(F)$
constructed above.
\end{df}
For each stack $F\in \St_k$, the $\Theta$-categories $\QCoh^{\LSym}(F)$
and $\IParf{}^{\LSym}(F)$ are equipped with a natural map
from $\IParf{}^{\LSym}(k)\simeq\QCoh^{\LSym}(k)$. 
They thus acquire a natural $k$-linear structure in the following sense.
\begin{df}\label{d-2-1}
We define a \emph{$k$-linear $\Theta$-category} to be a map of $\Theta$-categories
$\IParf{}^{\LSym}(k)\simeq \QCoh^{\LSym}(k)\to T^{\Theta}$. 
We will write
$$
k / \tcat := \IParf{}^{\LSym}(k) / \tcat
$$
for the $(2,\s)$-category of $k$-linear $\Theta$-categories.
\end{df}
\begin{lem}
The underlying $\s$-category $T$ of a $k$-linear $\Theta$-category $T^\Theta$ is a stable presentable
$\s$-category.
\end{lem}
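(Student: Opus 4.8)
The plan is to reduce to the stability of $T$ --- presentability being automatic --- and then to deduce it by recognizing $T$ as a module over the $\s$-category of spectra in $\scat^L_{pr}$.

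First I would record that $T$ is presentable: this is built into the definition of a $\Theta$-category, since $\tcat$ is a full sub-$(2,\s)$-category of the lax pull-back $C$ whose objects are triples $(T_1,(T_2'\to T_2),u)$ with $T_1 \in \scat^{\otimes}_{pr}$ a presentable symmetric monoidal $\s$-category, and $T_1$ is precisely the underlying $\s$-category $T$ of $T^\Theta$. Next, applying the conservative forgetful $(2,\s)$-functor $\tcat \to \scat^{\otimes}_{pr}$ to the structure morphism $\QCoh^{\LSym}(k) \to T^\Theta$ of a $k$-linear $\Theta$-category (Definition \ref{d-2-1}) produces a colimit-preserving symmetric monoidal $\s$-functor
$$\phi\colon \QCoh(k) \longrightarrow T,$$
equivalently a morphism of commutative algebra objects in $\scat^L_{pr}$ with its Lurie tensor product \cite[\S4.8.1]{HA}. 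So the only content left is that a presentable symmetric monoidal $\s$-category receiving such a $\phi$ from $\QCoh(k)$ is automatically stable.

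The key step is to transport stability along $\phi$. The symmetric monoidal $\s$-category $\QCoh(k) \simeq \Mod_{Hk}(\mathrm{Sp})$ of complexes of $k$-modules is a \emph{stable} presentable symmetric monoidal $\s$-category, and as such it carries a canonical colimit-preserving symmetric monoidal $\s$-functor $\mathrm{Sp} \to \QCoh(k)$ (base change along $\sS \to Hk$, i.e.\ smashing with the Eilenberg--MacLane ring spectrum $Hk$), which makes it a commutative algebra over $\mathrm{Sp}$ in $\scat^L_{pr}$. Composing with $\phi$ --- a morphism of commutative algebras in $\scat^L_{pr}$ --- equips $T$ with the structure of an $\mathrm{Sp}$-algebra, in particular of an $\mathrm{Sp}$-module in $\scat^L_{pr}$. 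By Lurie's identification of stable presentable $\s$-categories with modules over $\mathrm{Sp}$ --- namely, the forgetful $(2,\s)$-functor $\Mod_{\mathrm{Sp}}(\scat^L_{pr}) \to \scat^L_{pr}$ is fully faithful with essential image the stable presentable $\s$-categories, see \cite[\S4.8.2]{HA} --- it follows that $T$ is stable, which together with presentability gives the statement.

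I do not expect a genuine obstacle here: the only nontrivial ingredient is Lurie's identification of stable presentable $\s$-categories with $\mathrm{Sp}$-modules, and the rest is formal manipulation of algebras and modules in $\scat^L_{pr}$. If one prefers a more hands-on argument for the final step, one can proceed as follows once $T$ is known to be pointed: since $k[1]$ is $\otimes$-invertible in $\QCoh(k)$, its image $L := \phi(k[1])$ is $\otimes$-invertible in $T$ with inverse $\phi(k[-1])$, and the suspension endofunctor then satisfies $\Sigma_T(x) \simeq \Sigma_T(\mathbf{1}) \otimes x \simeq L \otimes x$ and is therefore an equivalence; stability follows from \cite[Cor.~1.4.2.27]{HA}. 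The module-theoretic route is preferable precisely because it does not require separately verifying that $T$ is pointed.
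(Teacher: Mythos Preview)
Your proof is correct and follows essentially the same approach as the paper: both observe that the structure map $\QCoh(k)\to T$ makes $T$ tensored over a stable presentable $\s$-category, which forces $T$ to be stable. The paper phrases this directly in terms of the $\QCoh(k)$-tensoring, while you factor through $\mathrm{Sp}$ and invoke Lurie's identification of stable presentable $\s$-categories with $\mathrm{Sp}$-modules; these are the same argument at different levels of detail.
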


\begin{proof}
The underlying symmetric monoidal $\s$-functor
$u \colon \QCoh(k) \to T$ induces a natural tensored structure of $T$ over $\QCoh(k)$ by the simple formula
$E \otimes x := u(E)\otimes x$,
for $E \in \QCoh(k)$. As $\QCoh(k)$ is stable, this automatically implies the stability of $T$.
\end{proof}

\subsection{Tannakian $\Theta$-categories and Tannakian gerbes}
For the sake of simplicity we will only deal with the neutral case in the present work. 
\begin{df}
A \emph{neutralized $k$-linear Tannakian $\Theta$-category} is a map of $k$-linear $\Theta$-categories
$$\omega\colon  T^\Theta \longrightarrow \IParf{}^{\LSym}(k)\simeq \QCoh^{\LSym}(k)$$
satisfying the following conditions.

\begin{enumerate}
    \item[(T1)] The underlying presentable symmetric monoidal $\s$-category $T$ is "rigid": it
    is compactly generated and compact objects in $T$ coincide with dualizable objects.
    
    \item[(T2)] The underlying $\s$-functor $\omega \colon T \to \IParf(k) \simeq \QCoh(k)$ 
    is conservative.
    
    \item[(T3)] Let $\QCoh(k)^{\leq 0}\subset \QCoh(k)$ denote the full sub-$\s$-category of 
    connective objects. The full sub-$\s$-category 
    $\omega^{-1}(\QCoh(k)^{\leq 0}) \subset T$ then defines the connective part
    of a bounded (on compact objects)
    $t$-structure on $T$ 
    for which $\omega$ becomes $t$-exact.
    
\end{enumerate}
Let us write $k/\tcat/k := \big(k/\tcat\big)\big/\QCoh^{\LSym}(k)$ for 
the $(2,\s)$-category of $k$-linear $\Theta$-categories over $\QCoh^{\LSym}(k)$ and
$$
k\text{-}\Tan_*^\Theta \subseteq k/ \tcat/ k
$$
for the full sub-$(2,\s)$-category on the neutralized $k$-linear Tannakian $\Theta$-categories. Here
the "$*$" refers to the neutralized assumption. As $\omega$ is conservative, the $(2,\s)$-category
$k\text{-}\Tan_*^\Theta$ is in fact a $(1,\s)$-category. 
\end{df}

\begin{df}
A \emph{pointed} (or \emph{neutralized}) \emph{Tannakian gerbe} over $k$ is a pointed stack $F \in \ast/\St_k$ satisfying the following conditions.

\begin{enumerate}
    \item[(G1)] The natural morphism of fpqc-sheaves $* \to \pi_0(F)$ is an isomorphism.
    
    \item[(G2)] The stack $F$ is \emph{pointed Q-local}: for any morphism of 
    pointed and connected stacks $u \colon G_1 \to G_2$
    whose induced $\s$-functor $u^* \colon \QCoh(G_2) \to \QCoh(G_1)$ is an equivalence, 
    the induced morphism in mapping spaces
    $$u^* \colon \Map_{\St_k}(G_2,F) \to \Map_{\St_k}(G_1,F)$$
    is an equivalence.
    
    \item[(G3)] There exists a coconnective $k$-linear $\LSym$-algebra $B$ and an equivalence
    $$\Spec\, B \simeq \Omega_*F$$
    (see \cite{chaff} for the notion of spectrum of coconnective $\LSym$-algebras and the theory 
    of affine stacks).
    Moreover, $B$ is of positive Tor-dimension (also called connectively flat): if
    $E$ is a coconnective object in $\QCoh(k)$, then $E \otimes_k B$ remains coconnective.
    
    \item[(G4)] The $k$-linear $\Theta$-functor $\omega\colon \IParf{}^{\LSym}(F) \to \IParf{}^{\LSym}(k)$, obtained by 
    pull-back along the base point $* \to F$, makes $\IParf{}^{\LSym}(F)$ into a Tannakian
    $\Theta$-category.
    
\end{enumerate}

We will write $k\text{-}\Gerbe_* \subset */\St_k$ for the full sub-$\s$-category
on the pointed Tannakian gerbes over $k$.
\end{df}

Let $T^\Theta$
be a $k$-linear $\Theta$-category. For a commutative $k$-algebra $A$, we set
$$\Fib^{\LSym}(T^\Theta)(A):=\Map_{k/\tcat}(T,\QCoh^{\LSym}(A)).$$
Since $\QCoh^{\LSym}$ defines an fpqc-stack of $k$-linear $\Theta$-categories, 
varying $A$ inside the category of commutative $k$-algebras yields an fpqc-stack
$\Fib^{\LSym}(T^\Theta)$, called the \emph{stack of $\Theta$-fiber functors on $T^\Theta$}. This is part of an adjunction
$$\xymatrix{
\Fib^{\LSym}\colon k/\tcat\ar@<1ex>[r] &  \St^{op}_k : \QCoh^{\LSym}.\ar@<1ex>[l]
}$$
Indeed, since $\QCoh^{\LSym}$ satisfies fpqc descent, there is a natural equivalence for $F \in \St_k$ and $T^\Theta \in k/\tcat$
$$\Map_{k/\tcat}(T^\Theta,\QCoh^{\LSym}(F)) \simeq 
\Map_{\St_k}(F,\Fib^{\LSym}(T^\Theta)).$$
In particular, we have adjunction morphisms
$$\beta_{T^\Theta} \colon T^\Theta \to \QCoh^{\LSym}\big(\Fib^{\LSym}(T^\Theta)\big) \qquad
F \to \Fib^{\LSym}\big(\QCoh^{\LSym}(F)\big).$$
The second morphism, composed with the canonical morphism $\IParf{}^{\LSym}(F) \to \QCoh^{\LSym}(F)$, induces a morphism
$$\alpha_F \colon F \to \Fib^{\LSym}\big(\IParf{}^{\LSym}(F)\big).$$

This extends naturally to the neutralized, or pointed, situation as follows. Let us write
$\Fib_*^{\LSym}(T^\Theta) \subset \Fib^{\LSym}(T^\Theta)$ for the full sub-stack consisting of all fiber functors locally equivalent to 
$\omega \colon T^{\Theta} \to \QCoh^{\LSym}(k)$. This full sub-stack can also be written
$$\Fib_*^{\LSym}(T^\Theta) \simeq B\underline{aut}^{\Theta}(\omega) \subset \Fib^{\LSym}(T^\Theta)$$
where $\underline{aut}^{\Theta}(\omega)$ is the stack of $\Theta$-auto-equivalences of $\omega$. 
The stack $\Fib_*^{\LSym}(T^\Theta)$ is connected (by definition) and canonically pointed at $\omega$.
Note that 
any $\Theta$-endomorphism of $\omega$ is also a self-equivalence, as a consequence of the
rigidity condition (T1).

Similarly, if $x \colon * \to F$ is a pointed connected stack, we can identify $\QCoh^{\LSym}(F)$ 
with an object in $k/\tcat/k$ using its canonical fiber functor at the basepoint
$x^* \colon \QCoh^{\LSym}(F) \to \QCoh^{\LSym}(k)$. 
This yields a similar adjunction
$$\xymatrix{
\Fib_\ast^{\LSym}\colon k/\tcat/k\ar@<1ex>[r] &  (\ast/\St^{cn}_k)^{op} : \QCoh^{\LSym}\ar@<1ex>[l]
}$$
where $\ast/\St^{cn}_k$ denotes the $\s$-category of pointed connected stacks.
Using the unit and counit of this adjunction, together with natural morphism $\IParf{}^{\LSym} \to \QCoh^{\LSym}(F)$, we obtain
two canonical morphisms
$$\beta_{T^\Theta} \colon T^\Theta \to \QCoh^{\LSym}(\Fib_*^{\LSym}(T^\Theta))
 \qquad\quad \alpha_F \colon F \to \Fib_*^{\LSym}(\IParf{}^{\LSym}(F))$$

The Tannakian duality investigated in the next chapter ensures 
that $\beta$ and $\alpha$ are somehow equivalences when $T^\Theta$ is Tannakian and
$F$ is a Tannakian gerbe. We will only have partial answers,
but we refer to Remarks \ref{r1} and \ref{r2} for 
some indications of 
how better results can be obtained.

\subsection{The Tannakian duality theorem}

We start by investigating the morphism $\beta$.

\begin{thm}\label{t1}
Let $T^\Theta$ be a neutralized Tannakian $\Theta$-category over $k$. Then the following hold.
\begin{enumerate}
    \item The unit morphism of $\Theta$-categories 
    $\beta_{T^\Theta}\colon T^\Theta \to \QCoh^{\LSym}(\Fib^{\LSym}_*(T^\Theta))$ induces an 
    equivalence of symmetric monoidal $\s$-categories
    $$T \simeq \IParf(\Fib^{\LSym}_*(T^\Theta)).$$
    \item The stack $\Fib_*^{\LSym}(T^\Theta)$ is a pointed Tannakian gerbe.
\end{enumerate}
\end{thm}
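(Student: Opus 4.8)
The plan is to deduce the theorem from a sequence of descent arguments, reducing everything to the affine case where $\IParf\simeq\QCoh$ and the classical $\LSym$-algebra Tannakian picture of \cite{chaff} applies.

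The first step is to produce a presentation of the stack $\Fib^{\LSym}_*(T^\Theta)$. Since $T$ is rigid (T1), it is compactly generated by dualizable objects, so the fiber functor $\omega\colon T\to\QCoh(k)$ together with its $\Theta$-structure should be recovered from the $\LSym$-algebra structure on the image of the unit, or more precisely: applying Proposition \ref{p1-2} and Proposition \ref{p1-3} to the $\Theta$-functor $\omega$, the conditions (T1)--(T2) guarantee that $\omega$ satisfies the hypotheses of Proposition \ref{p1-3} (conservativity is (T2); preservation of geometric realizations and the projection formula follow from rigidity, as $\omega$ then has a $T$-linear right adjoint). Hence $T^\Theta\simeq \omega_*(\mathbf 1)\text{-}\Mod(\QCoh^{\LSym}(k))$, and $B:=\omega_*(\mathbf 1)$ is an $\LSym$-algebra over $k$, i.e. a cosimplicial-type object whose $\Spec$ in the sense of \cite{chaff} is an affine stack. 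One then identifies $\Fib^{\LSym}_*(T^\Theta)$ with the classifying stack of the loop stack $\Omega_*\Spec B$; coconnectivity and positive Tor-dimension of $B$ (giving (G3)) should come from the boundedness of the $t$-structure in (T3) together with $t$-exactness of $\omega$ — this is where the hypotheses (T3) are really used.

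Given this presentation, part (1) becomes the assertion that for $F=\Fib^{\LSym}_*(T^\Theta)$ the natural map $B\text{-}\Mod(\QCoh^{\LSym}(k))\to \IParf^{\LSym}(F)$, i.e. the comparison $T\simeq\IParf(F)$, is an equivalence of symmetric monoidal $\s$-categories. Since $F$ is built by descent from affine pieces over which $\IParf\simeq\QCoh$, and since $\QCoh^{\LSym}$ satisfies fpqc descent (Remark \ref{r-2-1}), the left-hand side is the limit of $\QCoh$ over the affine cover; the content is that the rigid $\s$-category $T$, being generated by its compact=dualizable objects, matches $\IParf(F)=\Ind(\Parf(F))$ rather than the a priori larger $\QCoh(F)$. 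Here rigidity is exactly what forces $T$ to be ind-perfect: dualizable objects of $\QCoh^{\LSym}(k)$-modules are perfect, so the compact generators of $T$ land in $\Parf(F)$, and compact generation upgrades this to an equivalence $T\simeq\Ind(\Parf(F))=\IParf(F)$. Unwinding the adjunction $\Fib^{\LSym}\dashv\QCoh^{\LSym}$ then identifies this equivalence with $\beta_{T^\Theta}$ on underlying categories.

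For part (2) one checks (G1)--(G4) for $F=\Fib^{\LSym}_*(T^\Theta)$ in turn. (G1) is immediate since $\Fib^{\LSym}_*$ is connected by construction. (G2), pointed Q-locality, follows because mapping into $\Fib^{\LSym}_*(T^\Theta)$ is by adjunction the space of $\Theta$-fiber functors on $T$, which only depends on $\QCoh$ of the source. (G3) is the coconnective positive-Tor-dimension $\LSym$-algebra $B$ produced above, with $\Spec B\simeq\Omega_*F$ coming from $F\simeq B\underline{aut}^\Theta(\omega)$. (G4) is precisely part (1) combined with (T1)--(T3): $\IParf^{\LSym}(F)\simeq T^\Theta$ as neutralized $k$-linear $\Theta$-categories, which is Tannakian by hypothesis. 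The main obstacle I anticipate is the first step: rigorously extracting from (T1)--(T3) that $B=\omega_*(\mathbf 1)$ is a \emph{coconnective} $\LSym$-algebra of positive Tor-dimension and that $\Spec B$ behaves as an affine stack in the sense of \cite{chaff} — this requires carefully matching the bounded $t$-structure on the rigid category $T$ with the cohomological grading on the $\LSym$-algebra, and is the place where one genuinely leaves soft formal nonsense and must invoke the finiteness inputs of the Tannakian hypotheses.
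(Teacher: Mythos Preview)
Your first step contains a directionality error that derails the rest. Proposition~\ref{p1-3} applied to $\omega\colon T^\Theta\to\QCoh^{\LSym}(k)$ yields
\[
\omega_*(\mathbf{1})\text{-}\Mod(T^\Theta)\ \simeq\ \QCoh^{\LSym}(k),
\]
not $T^\Theta \simeq \omega_*(\mathbf{1})\text{-}\Mod(\QCoh^{\LSym}(k))$ as you write. The algebra $B:=\omega_*(\mathbf{1})$ lives in $\TCAlg(T^\Theta)$, not in $\QCoh(k)$; the $k$-linear $\LSym$-algebra that eventually presents the loop stack is $\omega(B)=\omega\omega_*(\mathbf{1})$, extracted via the projection formula $B\otimes B\simeq\omega_*(\omega(B))$. (Relatedly, the hypothesis of Proposition~\ref{p1-3} is conservativity of $\omega_*$, which holds because $\omega(\mathbf{1})=k$ generates $\QCoh(k)$; condition (T2), conservativity of $\omega$, is used at a different point.) Were your identification correct, Proposition~\ref{p1-2} would force $\Fib^{\LSym}(T^\Theta)\simeq\Spec\,B$ to be affine rather than a classifying stack.

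The paper instead runs the descent \emph{internal to $T$}. One forms the co-nerve $B^{\otimes\bullet+1}$ of $\mathbf{1}\to B$ in $\TCAlg(T^\Theta)$; Proposition~\ref{p1-2} together with the equivalence above shows that $\Fib^{\LSym}$ of the resulting cosimplicial diagram of module $\Theta$-categories is the nerve of the base point, so $\Fib_*^{\LSym}(T^\Theta)\simeq BG$ with $G=\Spec\,\omega(B)$. The non-formal input for (1) is an abstract positive-Tor-dimension descent (Proposition~\ref{pa}): since $\omega(B)$, and hence $B$, has positive Tor-dimension and $B\otimes(-)\simeq\omega$ is conservative by (T2), one obtains $T^{>-\infty}\simeq\lim_{\Delta} B^{\otimes\bullet+1}\text{-}\Mod(T)^{>-\infty}$. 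Restricting to dualizable objects and comparing with the limit presentation of $\Parf(BG)$ through levelwise fully faithful functors (the $0$-th level being an equivalence) yields $T^{rig}\simeq\Parf(BG)$, hence $T\simeq\IParf(BG)$. Your outline for part~(2) is essentially correct and matches the paper.
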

The proof of Theorem \ref{t1} relies on the following property of the fiber functor
of a neutralized Tannakian $\Theta$-category.
\begin{lem}\label{l-2-2}
Let $T^\Theta$ be a neutralized Tannakian $\Theta$-category over $k$. Then there is an equivalence of $\Theta$-categories under $T^{\Theta}$
$$\xymatrix{
& T^{\Theta}\ar[ld]_-{\omega}\ar[rd]^-{\hspace{5pt}\omega_*(\mathbf{1})\otimes -}\\
\QCoh(k)\ar[rr]^-{\omega_*}_-\sim & & \omega_*(\mathbf{1})\text{-}\Mod(T^{\Theta}).
}$$
\end{lem}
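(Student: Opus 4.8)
The plan is to apply Proposition \ref{p1-3} to the $\Theta$-functor $\omega\colon T^\Theta\to\QCoh^{\LSym}(k)$, since that proposition says precisely that $f_*(\mathbf{1})\text{-}\Mod(T_1^\Theta)\to T_2^\Theta$ is an equivalence of $\Theta$-categories once the right adjoint $f_*$ is conservative, preserves geometric realizations, and satisfies the projection formula. Here the roles are reversed compared to the statement of Lemma \ref{l-2-2}: I would use $\omega$ in the role of $T_1^\Theta\to T_2^\Theta$ read \emph{backwards}, i.e. the relevant adjunction is $\omega^*\colon\QCoh(k)\leftrightarrows T:\omega_*$ is wrong — rather, since $\omega\colon T\to\QCoh(k)$ is itself a symmetric monoidal left adjoint (it is a morphism in $\scat_{pr}^\otimes$), the $\Theta$-functor $\omega$ plays the role of "$f$" with $T_1^\Theta=T^\Theta$ and $T_2^\Theta=\QCoh^{\LSym}(k)$, and $f_*=\omega_*\colon\QCoh(k)\to T$. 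Then Proposition \ref{p1-3} gives an equivalence $\omega_*(\mathbf{1})\text{-}\Mod(T^\Theta)\simeq\QCoh^{\LSym}(k)$. But this is the wrong target. So instead I would apply the proposition in the other direction, using the \emph{right} adjoint $\omega_*\colon\QCoh(k)\to T$ as the underlying symmetric monoidal $\s$-functor of a $\Theta$-functor — which requires first checking that $\omega_*$ is itself symmetric monoidal (equivalently that $\omega$ satisfies the projection formula and $\omega_*(\mathbf{1})$ is invertible with trivial algebra structure). Concretely: the claim $\QCoh(k)\xrightarrow{\ \sim\ }\omega_*(\mathbf{1})\text{-}\Mod(T^\Theta)$ together with compatibility over $T^\Theta$ is exactly the conclusion of Proposition \ref{p1-3} applied to the $\Theta$-functor $\omega_*(\mathbf{1})\text{-}\Mod(T^\Theta)\to\QCoh^{\LSym}(k)$ — no, this is circular. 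Let me restate the intended route cleanly.

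The correct route: apply Proposition \ref{p1-3} to the $\Theta$-functor $\omega\colon T^\Theta\to \QCoh^{\LSym}(k)$ with underlying symmetric monoidal functor $\omega\colon T\to \QCoh(k)$, but where we have secretly swapped the names so that the "$f$" of Proposition \ref{p1-3} is $\omega$ and we land at $\omega_*(\mathbf{1})\text{-}\Mod(T^\Theta)\to \QCoh^{\LSym}(k)$; this is the identification $\QCoh(k)\simeq\omega_*(\mathbf{1})\text{-}\Mod(T^\Theta)$ read via $\omega_*$, provided $\omega_*$ is conservative, preserves geometric realizations, and satisfies the projection formula $E\otimes\omega_*(x)\to\omega_*(\omega(E)\otimes x)$. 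I would verify these three conditions in turn. Conservativity and preservation of geometric realizations of $\omega_*$ follow from the rigidity hypothesis (T1): because $T$ is compactly generated by dualizable objects and $\omega$ is symmetric monoidal preserving compacts (it sends dualizables to dualizables in $\QCoh(k)=\IParf(k)$), $\omega$ preserves compact objects, hence $\omega_*$ preserves filtered colimits; and since $\omega$ is also conservative (T2) and symmetric monoidal with $T$ rigid, a standard argument shows $\omega_*$ preserves all colimits and is conservative. The projection formula for $\omega_*$ is the standard consequence of $\omega$ preserving the internal homs / being a monoidal left adjoint with $T$ rigid: for $E$ dualizable in $\QCoh(k)$ one has $E\otimes\omega_*(x)\simeq\omega_*(E)^\vee{}^\vee\otimes\ldots$ — more simply, rigidity of $T$ together with the projection formula for $\omega$ (which holds since $\omega$ is symmetric monoidal) dualizes to give the projection formula for $\omega_*$ on dualizable $E$, and then one extends to all $E$ by writing $E$ as a filtered colimit of dualizables and using that all functors in sight preserve filtered colimits.

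Having checked the hypotheses of Proposition \ref{p1-3} for (the $\Theta$-functor underlying) $\omega_*$, the proposition yields an equivalence of $\Theta$-categories $\omega_*(\mathbf{1})\text{-}\Mod(T^\Theta)\simeq\QCoh^{\LSym}(k)$ — wait, the direction is again backwards. To get the triangle as drawn, with $\omega_*\colon\QCoh(k)\to\omega_*(\mathbf{1})\text{-}\Mod(T^\Theta)$ an equivalence \emph{under} $T^\Theta$, I would instead apply Proposition \ref{p1-3} to the identity-like $\Theta$-functor, or more precisely recognize that the natural $\Theta$-functor produced just before Proposition \ref{p1-3}, applied to $f=\omega\colon T^\Theta\to\QCoh^{\LSym}(k)$, is the map $\omega_*(\mathbf{1})\text{-}\Mod(T^\Theta)\to\QCoh^{\LSym}(k)$ whose underlying adjunction is $\mathbf{1}\otimes_{\omega(\omega_*(\mathbf{1}))}\omega(-)\colon \omega_*(\mathbf{1})\text{-}\Mod(T)\leftrightarrows\QCoh(k):\omega_*$; Proposition \ref{p1-3} says it is an equivalence. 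Inverting it and precomposing $\omega_*\colon\QCoh(k)\to T$ with the free-module functor $T\to\omega_*(\mathbf{1})\text{-}\Mod(T^\Theta)$, $x\mapsto\omega_*(\mathbf{1})\otimes x$, produces exactly the right-hand slanted arrow $\omega_*(\mathbf{1})\otimes-$, and commutativity of the triangle is the statement that this free-module composite agrees with $\omega_*\colon\QCoh(k)\xrightarrow{\sim}\omega_*(\mathbf{1})\text{-}\Mod(T^\Theta)$, which holds because under the equivalence of Proposition \ref{p1-3} the right adjoint $\omega_*$ on the $\QCoh(k)$ side is matched with the forgetful-then-$\omega_*$ functor on the module side, and its left adjoint is correspondingly matched with $x\mapsto\omega_*(\mathbf{1})\otimes x$. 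The main obstacle I anticipate is the careful verification of the projection formula for $\omega_*$ (condition (2) of Proposition \ref{p1-3}) and of the fact that $\omega_*$ preserves \emph{geometric realizations} — both ultimately rest on rigidity (T1) plus $\omega$ preserving compact objects, but getting the reduction-to-dualizables argument exactly right, and checking that the equivalence of Proposition \ref{p1-3} genuinely matches the two descriptions of the functor $\QCoh(k)\to\omega_*(\mathbf{1})\text{-}\Mod(T^\Theta)$ so that the displayed triangle commutes, is where the real work lies.
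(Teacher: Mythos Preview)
Your approach---apply Proposition~\ref{p1-3} to $\omega\colon T^\Theta\to\QCoh^{\LSym}(k)$, deducing that $\omega_*$ preserves colimits from the fact that $\omega$ preserves compact objects (via (T1)), and checking the projection formula first for dualizable objects and then extending by colimits---is exactly the paper's. Two small corrections to your sketch: conservativity of $\omega_*$ does not come from (T2) but from $\omega(\mathbf{1})=k$ being a generator of $\QCoh(k)$, and in the projection formula $x\otimes\omega_*(y)\to\omega_*(\omega(x)\otimes y)$ of Proposition~\ref{p1-3} it is the variable $x\in T$ (not an object of $\QCoh(k)$) that one reduces to dualizable.
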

\begin{proof}
It suffices to verify that $\omega \colon T^\Theta \to \QCoh^{\LSym}(k)$
satisfies the conditions of Proposition \ref{p1-3}.
As dualizable objects are preserved by symmetric monoidal $\s$-functors, 
$\omega$ preserves compact objects by condition (T1).
Since $T$ is stable by Lemma \ref{l-2-1}, this implies
that its right adjoint $\omega_* \colon \QCoh(k) \to T$ preserves colimits, 
so in particular geometric realizations. Condition $(2)$ of Proposition \ref{p1-3} is
obvious when $x$ is a dualizable object, because we have for any $z \in T$
\begin{align*}
\Map_{T}(z,x \otimes \omega_*(y)) &\simeq \Map_{T}(z\otimes x^\vee,\omega_*(y)) \simeq 
\Map_T(\omega(z) \otimes \omega(x)^\vee,y)\\
&\simeq\Map_T(\omega(z),\omega(x)\otimes y) \simeq \Map_T(z,\omega_*(\omega(x)\otimes y)).\end{align*}
The general case follows by writing $x$ as a colimit of compact
and thus dualizable objects.
\end{proof}
\begin{proof}[Proof of Theorem \ref{t1}]
Let us start by describing the stack 
$\Fib^{\LSym}_*(T^\theta)\simeq B\underline{aut}^\Theta(\omega)$ in more detail.
Let us write
$$
B := \omega_*(\mathbf{1})\in \TCAlg(T^{\Theta})
$$
and let $B^*\colon \Delta_+^{op}\to \TCAlg(T^{\Theta})$ be
the augmented cosimplicial diagram given by the co-nerve of the map
$\mathbf{1}\to B$ (so $B^{-1}=\mathbf{1}$ and $B^0=B$).
We obtain an augmented cosimplicial diagram of $\Theta$-categories 
$$
B^*\text{-}\Mod(T^{\Theta}) \in T^{\Theta}/\tcat
$$
By Proposition \ref{p1-2}, the co-Segal maps exhibit each
$B^n\text{-}\Mod(T^{\Theta})$ as the $(n+1)$-fold coproduct
of $B\text{-}\Mod(T^{\Theta})$ in $T^{\Theta}/\tcat$. 
It follows that the simplicial diagram of stacks of $\Theta$-fiber functors
$$
G_*:=\Fib^{\LSym}(B^*\text{-}\Mod(T^{\Theta}))
$$
is equivalent to the nerve of the map 
$\Fib^{\LSym}(B\text{-}\Mod(T^{\Theta}))\to \Fib^{\LSym}(T^{\Theta})$. 
By Lemma \ref{l-2-2}, this is equivalent to the nerve of the basepoint 
$x\colon \ast\to \Fib^{\LSym}(T^{\Theta})$ corresponding to $\omega$. 
Consequently, we obtain an equivalence of group stacks
$$
\underline{aut}^\Theta(\omega)\simeq \Fib^{\LSym}\big(B^{\otimes 2}\text{-}\Mod(T^{\Theta})\big)=: G
$$
where the group structure on $G$ is encoded by the simplicial diagram $G_*$ 
(in particular, $G_n\simeq G^{\times n}$ via the Segal maps).
The stack $G$ can be identified more explicitly as follows.

\begin{lem}\label{l-2-3}
Let $T^\Theta$ be a neutralized Tannakian $\Theta$-category over $k$. 
Then the stack of $\Theta$-self-automorphisms of its fiber functor $\omega$
is an affine stack, given by
$$
\underline{aut}^\Theta(\omega) \simeq \Spec\, \omega(B).
$$
In addition, the $\LSym$-algebra $\omega(B)$ has positive tor-dimension.
\end{lem}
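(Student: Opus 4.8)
The plan is to compute the stack $\underline{aut}^\Theta(\omega)$ directly from its functor of points, using the identification of $T^\Theta$ with a module $\Theta$-category provided by Lemma \ref{l-2-2}. First I would fix a commutative $k$-algebra $A$ and unravel the $A$-points. An $A$-point of $\underline{aut}^\Theta(\omega) \simeq G = \Fib^{\LSym}(B^{\otimes 2}\text{-}\Mod(T^\Theta))$ is a $k$-linear $\Theta$-functor $B^{\otimes 2}\text{-}\Mod(T^\Theta) \to \QCoh^{\LSym}(A)$. By Lemma \ref{l-2-2}, $T^\Theta \simeq B\text{-}\Mod(\QCoh^{\LSym}(k))$ as $\Theta$-categories, so by the coproduct description from Proposition \ref{p1-2} we get $B^{\otimes 2}\text{-}\Mod(T^\Theta) \simeq \big(B\otimes_k B\otimes_k B\big)\text{-}\Mod(\QCoh^{\LSym}(k))$ (three copies of $B$: two from the two-fold coproduct defining $B^{\otimes 2}$, one from $T^\Theta$ itself, all tensored over $k$), where the relevant tensor products are those of $\LSym$-algebras. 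Here I am writing $B$ also for $\omega(B) = \omega_*(\mathbf{1})$ viewed in $\QCoh^{\LSym}(k) = \LSym\text{-}\CAlg(k)$; by Proposition \ref{p1-2} again, maps of $\Theta$-categories out of a module $\Theta$-category $C\text{-}\Mod(\QCoh^{\LSym}(k))$ to $\QCoh^{\LSym}(A)$ correspond to maps of $\LSym$-algebras $C \to A$ (using that $\QCoh^{\LSym}(A) = A\text{-}\Mod(\QCoh^{\LSym}(k))$ and the fully faithful embedding $\TCAlg(\QCoh^{\LSym}(k)) \to \QCoh^{\LSym}(k)/\tcat$). So $A$-points of $G$ are maps of $\LSym$-algebras $\omega(B)^{\otimes_k 3} \to A$ — but one has to keep track of the pointing at $\omega$, i.e. the condition that both "outer" copies of $B$ map to $A$ via the structure map $\mathbf{1} \to B \to A$ coming from $\omega$ being the fixed base point. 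Imposing this, the $A$-points reduce to maps of $\LSym$-algebras $\omega(B) \to A$, giving $G \simeq \Spec\,\omega(B)$ as stacks, naturally compatibly with the group structure encoded by $G_*$.

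Alternatively — and this may be cleaner — I would argue more structurally: the simplicial diagram $G_* = \Fib^{\LSym}(B^*\text{-}\Mod(T^\Theta))$ is, by the argument already given in the proof of Theorem \ref{t1}, the nerve of the basepoint $\ast \to \Fib^{\LSym}(T^\Theta)$, so $G = G_1 = \ast\times_{\Fib^{\LSym}(T^\Theta)}\ast$. Under the adjunction $(\Fib^{\LSym}, \QCoh^{\LSym})$ and Lemma \ref{l-2-2}, $\Fib^{\LSym}(T^\Theta) = \Fib^{\LSym}(B\text{-}\Mod(\QCoh^{\LSym}(k)))$, and since $B\mapsto B\text{-}\Mod$ followed by $\Fib^{\LSym}$ sends the $\LSym$-algebra $\omega(B)$ to the corepresentable functor $A \mapsto \Map_{\LSym\text{-}\CAlg(k)}(\omega(B), A) = \Spec\,\omega(B)$ (this is precisely the content of Proposition \ref{p1-2} composed with the $\Fib/\QCoh$ adjunction and the affine case $\QCoh^{\LSym}(A) = A\text{-}\Mod$), we get $\Fib^{\LSym}(T^\Theta) \simeq \Spec\,\omega(B)$ as stacks over $k$, but pointed via the counit $\omega(B)\to k$. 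Then $G \simeq \Spec\,\omega(B)\times_{\Spec\,\omega(B)/k}\Spec\,\omega(B)$; but wait — this self-fiber-product over the point is not obviously $\Spec\,\omega(B)$ again, so the bookkeeping of the pointing is genuinely needed and the first, direct approach is the safer one to write down.

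For the positive Tor-dimension claim, I would argue as follows. The $\LSym$-algebra $\omega(B) = \omega(\omega_*(\mathbf{1}))$; by Lemma \ref{l-2-2} the unit $\mathbf{1}\to B$ in $\TCAlg(T^\Theta)$ becomes, after applying $\omega$ and using the projection formula from Proposition \ref{p1-3}(2), identified with the unit $k \to \omega(B)$, and $\omega(B) \simeq \omega_*(\mathbf{1})$ computed in $\QCoh(k)$ agrees with $\omega\circ\omega_*(\mathbf{1})$. The key input is the $t$-exactness of $\omega$ from condition (T3) together with rigidity (T1): the monoidal unit $\mathbf{1}\in T$ is compact, hence dualizable, hence bounded in the $t$-structure, and $\omega_*$ is a colimit-preserving right adjoint. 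I want to show that if $E\in\QCoh(k)$ is coconnective then $E\otimes_k \omega(B)$ is coconnective. Using $\omega(B) = \omega(\omega_*(\mathbf{1}))$ and the projection formula $E\otimes_k\omega(\omega_*(\mathbf{1})) \simeq \omega(\omega^*(E)\otimes\omega_*(\mathbf{1})) \simeq \omega(\omega_*(E\otimes\mathbf{1})) = \omega\omega_*(E)$ — here I use that $\omega^* = \omega_*$ is the right adjoint and condition (2) of Proposition \ref{p1-3}, i.e. the projection formula which holds since $\omega_*$ satisfies the hypotheses of that proposition by Lemma \ref{l-2-2} — so $E\otimes_k\omega(B)\simeq \omega\omega_*(E)$. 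Now $\omega$ is $t$-exact and $\omega_*$ is right $t$-exact (its left adjoint $\omega$ is $t$-exact, in particular right $t$-exact, so $\omega_*$ is left exact, hence preserves coconnectivity): if $E$ is coconnective then $\omega_*(E)$ is coconnective in $T$ and $\omega\omega_*(E)$ is coconnective in $\QCoh(k)$. Hence $\omega(B)$ has positive Tor-dimension.

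The main obstacle I anticipate is the first, bookkeeping part: correctly tracking the base point of $\omega$ through the coproduct description of $B^{\otimes 2}\text{-}\Mod(T^\Theta)$ and through the adjunction, so as to get $G\simeq\Spec\,\omega(B)$ rather than some larger affine stack; the naïve "$\Fib$ turns coproducts of modules into products of affine schemes" must be cut down by the pointing. The Tor-dimension statement, by contrast, should be a short formal consequence of the projection formula, $t$-exactness of $\omega$, and left-exactness of $\omega_*$ once the identification $E\otimes_k\omega(B)\simeq\omega\omega_*(E)$ is in place.
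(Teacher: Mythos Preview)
Your argument for the positive tor-dimension claim is correct and essentially identical to the paper's: the projection formula gives $E\otimes_k \omega(B)\simeq \omega(\omega_*(E))$, and since $\omega$ is $t$-exact while $\omega_*$ (being right adjoint to a right $t$-exact functor) is left $t$-exact, coconnectivity is preserved. (You wrote ``$\omega_*$ is right $t$-exact'' in one spot where you meant left $t$-exact, but the reasoning that follows it is the correct one.)

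The first part, however, rests on a misreading of Lemma \ref{l-2-2}. That lemma does \emph{not} assert $T^\Theta\simeq B\text{-}\Mod(\QCoh^{\LSym}(k))$; it asserts the opposite direction, namely that $\omega_*$ induces an equivalence
\[
\QCoh^{\LSym}(k)\ \xrightarrow{\ \sim\ }\ B\text{-}\Mod(T^\Theta),\qquad B=\omega_*(\mathbf{1})\in\TCAlg(T^\Theta).
\]
In particular $T^\Theta$ is not identified with a module $\Theta$-category over $\QCoh(k)$, and your claim $\Fib^{\LSym}(T^\Theta)\simeq\Spec\,\omega(B)$ is false (morally $\Fib^{\LSym}(T^\Theta)$ is the classifying stack $BG$, not $G$). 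Consequently there are no ``three copies of $B$'' and no pointing to track: in the proof of Theorem \ref{t1} preceding the lemma, the group $G=\underline{aut}^\Theta(\omega)$ has already been identified with the \emph{full} stack $\Fib^{\LSym}(B^{\otimes 2}\text{-}\Mod(T^\Theta))$, not a pointed substack of it.

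The paper's route is short and uses Lemma \ref{l-2-2} in the correct direction. Since $\omega_*\colon \QCoh^{\LSym}(k)\xrightarrow{\sim} B\text{-}\Mod(T^\Theta)$ is an equivalence and the projection formula gives $\omega_*(\mathbf{1})\otimes B\simeq \omega_*(\omega(B))$ as $\Theta$-algebras in $B\text{-}\Mod(T^\Theta)$, one obtains
\[
B^{\otimes 2}\text{-}\Mod(T^\Theta)\ \simeq\ \omega(B)\text{-}\Mod\big(\QCoh^{\LSym}(k)\big).
\]
Proposition \ref{p1-2} then immediately identifies the stack of $\Theta$-fiber functors on the right-hand side with $\Spec\,\omega(B)$, with no basepoint bookkeeping required.
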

\begin{proof}[Proof of Lemma \ref{l-2-3}]
Let $\omega_*(\mathbf{1})\otimes B\to\omega_*(\omega(B))$ be the natural map
of $\LSym$-algebras adjoint to the map 
$\omega(\omega_*(\mathbf{1}))\otimes \omega(B)\to \mathbf{1}\otimes \omega(B)$.
This is an equivalence by the projection formula.
Lemma \ref{l-2-2} therefore gives an equivalence
$$
B^{\otimes 2}\text{-}\Mod(T^{\Theta})\simeq \omega(B)\text{-}\Mod(\QCoh(k)).
$$
We now note that the stack of $\Theta$-fiber functors of 
$\omega(B)\text{-}\Mod(\QCoh(k))$ is equivalent to $\Spec\, \omega(B)$
by Proposition \ref{p1-2}.

To see that $\omega(B)$ has positive tor-dimension, let $E$ be a 
coconnective complex of $k$-modules. 
Then the projection formula shows that
$$E \otimes \omega(B)\simeq \omega(\omega_*(E))$$
remains coconnective,
since $\omega$ is t-exact and $\omega_*$ is (hence) left t-exact.
\end{proof}
All in all, we see that $\Fib_*^{\LSym}(T^{\Theta})\simeq BG$ where
$G=\Spec\, \omega(B)$ is the group stack whose group structure
is encoded by the simplicial diagram $G_*$. \\

Let us now prove (1). 
The unit map fits into a natural transformation of 
augmented cosimplicial $\Theta$-categories
$$\xymatrix@C=1.5pc{
T^\Theta\ar[r]\ar[d]_-{\beta_{T^\Theta}} & 
B\text{-}\Mod(T^{\Theta})\ar@<3pt>[r]\ar@<-3pt>[r]\ar[d]^-{\beta_0} &
B^{\otimes 2}\text{-}\Mod(T^{\Theta})\ar[r]\ar@<6pt>[r]\ar@<-6pt>[r]\ar[d]^-{\beta_1}  &
\dots\\
\QCoh^{\LSym}(BG)\ar[r] &
 \QCoh^{\LSym}(k) \ar@<1ex>[r]\ar@<-1ex>[r] &
 \QCoh^{\LSym}(G) \ar[r]\ar@<6pt>[r]\ar@<-6pt>[r] &
  \dots
}$$
The bottom row is a limit diagram by definition. 
For the top row, note that $B$ has positive tor-dimension because
(by Lemma \ref{l-2-3}) $\omega(B)$ has positive tor-dimension
and $\omega$ is $t$-exact and conservative.
In addition, $B\otimes -\colon T\to B\text{-}\Mod(T)$ is conservative,
since it is equivalent to $\omega$ by Lemma \ref{l-2-2}.
An abstract version of flat descent, for which we refer to Proposition \ref{pa},
therefore shows that the top row restricts to an equivalence on
eventually coconnective objects
$$
T^{>-\infty} \simeq \lim_{n\in \Delta}\big(B^*\text{-}\Mod(T^{>-\infty})\big).
$$
 
Since dualizable objects in $T$ are bounded for the $t$-structure,
we therefore obtain a diagram of symmetric monoidal $\s$-categories 
of dualizable objects
$$\xymatrix@C=1.5pc{
T^{rig}\ar[r]\ar[d]_-{\beta_{T^\Theta}} & B\text{-}\Mod(T)^{rig}\ar@<3pt>[r]\ar@<-3pt>[r]\ar[d]^-{\beta_0} & B^{\otimes 2}\text{-}\Mod(T)^{rig}\ar[r]\ar@<6pt>[r]\ar@<-6pt>[r]\ar[d]^-{\beta_1} & \dots\\
\Parf(BG)\ar[r] & \Parf(k) \ar@<1ex>[r]\ar@<-1ex>[r] & \Parf(G) \ar[r]\ar@<6pt>[r]\ar@<-6pt>[r] & \dots
}$$
in which both the top and bottom row are limit diagrams. 

For each $n\geq 0$, the functor 
$\beta_n\colon B^{\otimes 1+n}\text{-}\Mod(T)\longrightarrow \QCoh(G^{\times n})$ 
can be identified as follows. 
By the same projection formula argument as in the proof of Lemma \ref{l-2-3}, there 
is a symmetric monoidal equivalence
$$
B^{\otimes 1+n}\text{-}\Mod(T)\simeq \omega(B)^{\otimes n}\text{-}\Mod.
$$
By construction, this equivalence identifies $\beta_n$ with 
the canonical symmetric monoidal left adjoint $\s$-functor
$$
\OO\otimes_{\omega(B)^{\otimes n}} -\colon \omega(B)^{\otimes n}\text{-}\Mod\longrightarrow \QCoh(G^{\times n}) \simeq \QCoh(\Spec\, \omega(B)^{\otimes n})
$$
coming from the canonical equivalence, as an $E_\s$-algebra, 
between $\omega(B)^{\otimes n}$ and the endomorphisms of the 
unit $\OO\in \QCoh(\Spec\, \omega(B)^{\otimes n})$. Using 
that the right adjoint to the above functor takes global 
sections, one readily sees that the restriction to 
dualizable objects
$\omega(B)^{\otimes n}\text{-}\Mod^{rig} \to \Parf(G^{\times n})$ is fully faithful.

We conclude that each 
$\beta_n\colon B^{\otimes n}\text{-}\Mod(T)^{rig}\to \QCoh(G^{\times n})$ 
is fully faithful. 
In addition, $\beta_0$ is evidently an equivalence. 
It follows that the induced functor on limits
$$
\beta_{T^\Theta} \colon T^{rig}\xrightarrow{\simeq} \Parf(BG)
$$
is an equivalence as well. 
Since $T$ is rigid by (T1), it then follows that $\beta_{T^\Theta}$ induces 
a symmetric monoidal $\s$-categories $T\simeq \IParf(BG)$, proving (1).

We now turn to point (2). 
Condition (G1) for being a Tannakian gerbe is satisfied because 
$\Fib_*^{\LSym}(T^\Theta)$ is pointed and connected by definition. 
Condition (G3) holds by Lemma \ref{l-2-3} and condition (G4) holds by point (1).
It remains to prove that (G2) is also satisfied. 
For any other stack $G$, the inclusion $\Fib_*^{\LSym}(T^\Theta) \subset \Fib^{\LSym}(T^\Theta)$
induces a full embedding on mapping spaces
$$\Map_{\St_k}(G,\Fib_*^{\LSym}(T^\Theta)) \subset \Map_{\St_k}(G,\Fib^{\LSym}(T^\Theta)).$$
If $G$ is furthermore pointed and connected this inclusion fits into a cartesian square
$$\xymatrix{
\Map_{\St_k}(G,\Fib_*^{\LSym}(T^\Theta)) \ar[r] \ar[d] &  \Map_{\St_k}(G,\Fib^{\LSym}(T^\Theta)) \ar[d] \\
B\underline{aut}^{\Theta}(\omega)(k) \ar[r] &
\Fib^{\LSym}(T^\Theta)(k),
}$$
where bottom horizontal morphism is the inclusion of the connected
component containing the global point $\omega \in \Fib^{\LSym}(T^\Theta)(k)$. By the
definition of the stack $\Fib^{\LSym}(T^\Theta)$, the mapping 
space on the top right corner identifies canonically 
$$\Map_{\St_k}(G,\Fib^{\LSym}(T^\Theta)) \simeq \Map_{k/\tcat}(T^\Theta,\QCoh^{\LSym}(G)).$$
As a result, we see that if $f \colon G_1 \to G_2$ is a morphism between pointed and connected stacks,
such that $f^* \colon \QCoh(G_2) \to \QCoh(G_1)$ is an equivalence, then the induced morphism on mapping spaces
$$f^* \colon \Map_{\St_k}(G_2,\Fib_*^{\LSym}(T^\Theta)) \longrightarrow 
\Map_{\St_k}(G_1,\Fib_*^{\LSym}(T^\Theta))$$
is an equivalence as required, 
showing that $\Fib_*^{\LSym}(T^\Theta)$ is always pointed $Q$-local.
\end{proof}

The following corollary gathers the important conclusion of the Theorem \ref{t1}, which states the existence
of Tannakian duals for Tannakian $\Theta$-categories.

\begin{cor}\label{ct1}
Let $T^\Theta$ be a neutralized Tannakian $\Theta$-category over $k$. Then there exists
an affine group stack $G=\Spec\, C$, with $C$ a $k$-linear $LSym$-algebra of positive tor-dimension, 
and an equivalence of symmetric monoidal $\s$-categories
$$\beta : T \simeq \IParf(BG).$$
\end{cor}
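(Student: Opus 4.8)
The plan is to read off the statement directly from Theorem \ref{t1} together with the analysis carried out in its proof. By part (2) of Theorem \ref{t1}, the stack $F := \Fib^{\LSym}_*(T^\Theta)$ is a pointed Tannakian gerbe, and by part (1) the unit $\beta_{T^\Theta}$ induces a symmetric monoidal equivalence $T \simeq \IParf(F)$. So everything reduces to identifying $F$ with $BG$ for an affine group stack $G$ of the stated shape.

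First I would recall from the proof of Theorem \ref{t1} that, setting $B := \omega_*(\mathbf{1}) \in \TCAlg(T^\Theta)$, one has an equivalence $F \simeq BG$, where $G := \Fib^{\LSym}\big(B^{\otimes 2}\text{-}\Mod(T^\Theta)\big)$ is endowed with the group structure coming from the simplicial object $G_* = \Fib^{\LSym}(B^*\text{-}\Mod(T^\Theta))$ associated with the co-nerve of $\mathbf{1} \to B$. Then I would invoke Lemma \ref{l-2-3}: it identifies $G$ with $\Spec\, \omega(B)$ and shows that the $\LSym$-algebra $C := \omega(B) = \omega(\omega_*(\mathbf{1}))$ has positive tor-dimension. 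This $C$ is $k$-linear since $\omega$ is a morphism of $k$-linear $\Theta$-categories, and it is coconnective: the unit $\mathbf{1}$ lies in the heart of $\QCoh(k)$, the right adjoint $\omega_*$ is left $t$-exact (being the adjoint of a $t$-exact functor), and $\omega$ is $t$-exact, so $C = \omega(\omega_*(\mathbf{1}))$ is coconnective. Hence $G = \Spec\, C$ is a bona fide affine group stack in the sense of \cite{chaff}.

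Putting these together, $\beta_{T^\Theta}$ induces $T \simeq \IParf(F) \simeq \IParf(BG)$ with $G = \Spec\, C$ and $C$ a coconnective $k$-linear $\LSym$-algebra of positive tor-dimension, which is exactly the assertion. There is no real obstacle here: the content is entirely contained in Theorem \ref{t1} and Lemma \ref{l-2-3}, and the only mildly delicate point is bookkeeping the coconnectivity of $C$, so that $\Spec\, C$ genuinely falls within the theory of affine stacks of \cite{chaff}; this follows at once from $t$-exactness of $\omega$ and left $t$-exactness of $\omega_*$ applied to the monoidal unit.
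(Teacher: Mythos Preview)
Your proposal is correct and is essentially the same as the paper's approach: the corollary has no separate proof in the paper precisely because it merely extracts from Theorem \ref{t1} and its proof (in particular the identification $\Fib_*^{\LSym}(T^\Theta)\simeq BG$ with $G=\Spec\,\omega(B)$ from Lemma \ref{l-2-3}). Your added remark on coconnectivity of $C=\omega(\omega_*(\mathbf{1}))$ is a welcome bit of bookkeeping, already implicit in the verification of (G3) in the proof of Theorem \ref{t1}.
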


\begin{rmk}\label{r1}
The reader will notice that in the proof of Theorem \ref{t1} we have a commutative
diagram of symmetric monoidal $\s$-categories
$$\xymatrix{
T \ar[dr] \ar[r]^-{\simeq}& \IParf(\Fib_*^{\LSym}(T^\Theta)) \ar[d] \\
 & \QCoh(\Fib_*^{\LSym}(T^\Theta))
}$$
where the top horizontal $\s$-functor is an equivalence. 
Moreover, each of the two 
other $\s$-functors possesses a natural lift to a morphism of $\Theta$-categories. However, we have not been 
able to prove that the equivalence $T\simeq \IParf(\Fib_*^{\LSym}(T^\Theta))$
can be enhanced to an equivalence of $\Theta$-categories. This is due to our choice of definition
of $\Theta$-category, which is probably not optimal (see also Remark \ref{r2}).
\end{rmk}

The next corollary investigates the other direction of Tannakian duality, 
and the morphism $\alpha$. We recall that we have introduced the notion of $Q$-local
stacks in the definition of Tannakian gerbes (condition (G2)). It has a refined version 
called $P$-local, which already naturally appears in the setting of schematic 
homotopy theory of \cite{chaff}. A pointed and connected stack $F$ is said to be \emph{pointed $P$-local} 
if for any morphism of
pointed and connected stacks $u \colon G_1 \to G_2$ such that $u^* : \Parf(G_2) \to \Parf(G_1)$ is 
an equivalence, the induced morphism 
$$u^* \colon \Map_{\St_k}(G_2,F) \to  \Map_{\St_k}(G_1,F)$$
is also an equivalence. As perfect complexes are the rigid objects in quasi-coherent complexes, 
being pointed $Q$-local implies being pointed $P$-local. However, the converse does not hold.

\begin{cor}\label{ct2}
Let $F$ be a pointed Tannakian gerbe.
Then, the adjunction morphism
$$\alpha_F \colon F \to \Fib_*^{\LSym}(\IParf{}^{\LSym}(F))$$
induces an equivalence of $\Theta$-categories
$$\alpha_F^* \colon \IParf{}^{\LSym}(\Fib_*^{\LSym}(\IParf{}^{\LSym}(F))) \simeq \IParf{}^{\LSym}(F).$$
In particular, if the stack $F$ is pointed $P$-local, then 
$\alpha_F$ is a retract. If the stacks $F$ and $\Fib_*^{\LSym}(\IParf{}^{\LSym}(F))$ are both 
pointed $P$-local, then it is an equivalence of stacks.
\end{cor}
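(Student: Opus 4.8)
The plan is to derive everything from Theorem \ref{t1} applied to the $\Theta$-category $T^\Theta:=\IParf{}^{\LSym}(F)$. Condition (G4) for the pointed Tannakian gerbe $F$ says precisely that $\IParf{}^{\LSym}(F)$, together with the fiber functor $\omega$ obtained by pulling back along the base point $*\to F$, is a neutralized Tannakian $\Theta$-category over $k$; so Theorem \ref{t1} tells us that $G:=\Fib_*^{\LSym}(\IParf{}^{\LSym}(F))$ is again a pointed Tannakian gerbe, that the unit $\beta_{\IParf{}^{\LSym}(F)}\colon\IParf{}^{\LSym}(F)\to\QCoh^{\LSym}(G)$ restricts on dualizable objects to an equivalence $\phi\colon\Parf(F)\xrightarrow{\ \simeq\ }\Parf(G)$ whose ind-completion is the symmetric monoidal equivalence $\IParf(F)\simeq\IParf(G)$ of Theorem \ref{t1}(1), and (Remark \ref{r1}) that the underlying functor of $\beta_{\IParf{}^{\LSym}(F)}$ factors as $\IParf(F)\xrightarrow{\Ind(\phi)}\IParf(G)\xrightarrow{c_G}\QCoh(G)$, with $c_G\colon\IParf(G)\to\QCoh(G)$ the canonical comparison. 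To prove the main assertion $(1)$ I would show that the underlying symmetric monoidal functor of $\alpha_F^*$ is the inverse of $\Ind(\phi)$. Since the forgetful functor $\tcat\to\scat^{\otimes}_{pr}$ is conservative, and since $\alpha_F^*=\IParf{}^{\LSym}(\alpha_F)$ has underlying functor $\Ind$ of its restriction $\alpha_F^*|_{\Parf}\colon\Parf(G)\to\Parf(F)$ to compact objects, it is enough to show $\alpha_F^*|_{\Parf}$ is an equivalence.

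The computational core is the identity
$$\QCoh^{\LSym}(\alpha_F)\circ\beta_{\IParf{}^{\LSym}(F)}\ \simeq\ c_F\colon\IParf{}^{\LSym}(F)\longrightarrow\QCoh^{\LSym}(F),$$
where $c_F$ is the canonical comparison. This I would establish by a short diagram chase: write $\alpha_F$ as the composite of the counit $F\to\Fib_*^{\LSym}(\QCoh^{\LSym}(F))$ of the adjunction $\Fib_*^{\LSym}\dashv\QCoh^{\LSym}$ with $\Fib_*^{\LSym}(c_F)$, apply the contravariant functor $\QCoh^{\LSym}$, and combine the naturality of the unit $\beta$ along $c_F$ with the triangle identity $\QCoh^{\LSym}(\varepsilon)\circ\beta_{\QCoh^{\LSym}(-)}\simeq\mathrm{id}$. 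Passing to underlying functors, inserting the factorization $\beta_{\IParf{}^{\LSym}(F)}=c_G\circ\Ind(\phi)$ recalled above, and using the naturality of the comparison $c$ along $\alpha_F$, one obtains $c_F\circ\alpha_F^*\circ\Ind(\phi)\simeq c_F$ as functors $\IParf(F)\to\QCoh(F)$, hence after restriction to compact objects an equivalence $j_F\circ(\alpha_F^*|_{\Parf}\circ\phi)\simeq j_F$ of functors $\Parf(F)\to\QCoh(F)$, where $j_F\colon\Parf(F)\hookrightarrow\QCoh(F)$ is the inclusion. As $j_F$ is fully faithful, postcomposition with it is fully faithful on functor categories, so $\alpha_F^*|_{\Parf}\circ\phi\simeq\mathrm{id}_{\Parf(F)}$. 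Since $\phi$ is an equivalence, $\alpha_F^*|_{\Parf}$ is its inverse, hence an equivalence; by the reduction above, $\alpha_F^*$ is an equivalence of $\Theta$-categories, proving $(1)$.

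The remaining assertions follow formally. By $(1)$, the pullback $\alpha_F^*\colon\Parf(G)\to\Parf(F)$ along the morphism $\alpha_F\colon F\to G$ of pointed connected stacks is an equivalence. If $F$ is pointed $P$-local, applying the $P$-local property to $u=\alpha_F$ (with target $F$) shows $\alpha_F^*\colon\Map_{\St_k}(G,F)\to\Map_{\St_k}(F,F)$ is an equivalence; a preimage $r\colon G\to F$ of $\mathrm{id}_F$ then satisfies $r\circ\alpha_F\simeq\mathrm{id}_F$, so $\alpha_F$ is a retract. If in addition $G$ is pointed $P$-local, applying the $P$-local property to $u=\alpha_F$ (with target $G$) shows $\alpha_F^*\colon\Map_{\St_k}(G,G)\to\Map_{\St_k}(F,G)$ is an equivalence; since this map carries both $\mathrm{id}_G$ and $\alpha_F\circ r$ to $\alpha_F$ (the latter because $\alpha_F\circ r\circ\alpha_F\simeq\alpha_F$), we get $\alpha_F\circ r\simeq\mathrm{id}_G$, and combined with $r\circ\alpha_F\simeq\mathrm{id}_F$ this makes $\alpha_F$ an equivalence of stacks.

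I expect the genuine obstacle to be navigating between $\IParf{}^{\LSym}$ and $\QCoh^{\LSym}$: the adjunction that defines $\beta$ is set up with $\QCoh^{\LSym}$, while $\alpha_F$ is the $\IParf{}^{\LSym}$-flavoured counit, and Theorem \ref{t1} only gives an equivalence of \emph{underlying} symmetric monoidal $\s$-categories, not of $\Theta$-categories (cf. Remark \ref{r1}). Because the comparison $c_F$ is far from faithful in general, the identity $\QCoh^{\LSym}(\alpha_F)\circ\beta_{\IParf{}^{\LSym}(F)}\simeq c_F$ can only be exploited once one restricts to dualizable objects — where $c_F$ becomes the fully faithful inclusion $\Parf(F)\hookrightarrow\QCoh(F)$; this is the one essential use of the rigidity hypothesis (T1). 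Finally, the observation that $\alpha_F^*$, unlike $\beta$, is by construction a morphism of $\Theta$-categories is what allows conservativity of $\tcat\to\scat^{\otimes}_{pr}$ to upgrade the underlying equivalence to the claimed equivalence of $\Theta$-categories.
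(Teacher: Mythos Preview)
Your proof is correct and follows the same strategy as the paper: apply Theorem~\ref{t1} to $T^\Theta=\IParf{}^{\LSym}(F)$ to see that $\alpha_F^*$ is an equivalence on underlying symmetric monoidal $\s$-categories, then invoke conservativity of $\tcat\to\scat^{\otimes}_{pr}$. The paper's proof is two lines and simply asserts that the equivalence of Theorem~\ref{t1}(1) \emph{is} (the inverse of) $\alpha_F^*$ on $\IParf$; you have carefully supplied the missing identification via the triangle identity $\QCoh^{\LSym}(\alpha_F)\circ\beta_{\IParf{}^{\LSym}(F)}\simeq c_F$ and the restriction to dualizable objects, and you have also spelled out the $P$-local consequences that the paper leaves to the reader.
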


\begin{proof}
By our Theorem \ref{t1} applied to $\IParf{}^{\LSym}(F)$, the $\s$-functor induces
by pull-back along $\alpha_F$ is an equivalence on Ind-perfect complexes
$$\alpha_F^* \colon \IParf(\Fib_*^{\LSym}(\IParf{}^{\LSym}(F))) \simeq \IParf(F).$$
As the forgetful $\s$-functor $\tcat \to \scat^{\otimes}_{pr}$ is conservative, the corollary follows.
\end{proof}

\begin{rmk}\label{r2}
As a final comment, we see that the results on Tannakian duality are imperfect and do not induce
a nice equivalence between Tannakian $\Theta$-categories and Tannakian gerbes. One reason for this
has been identified in Remark \ref{r1}: the equivalence of symmetric monoidal $\s$-categories
of Corollary \ref{ct2} has not been shown to lift to an equivalence of $\Theta$-categories. We think it is
probably not possible to improve this using the present approach to $\Theta$-categories. The technical
complication comes here from the use of presentable, and thus non-small $\s$-categories everywhere, whereas
the natural approach would be to write everything in terms of $\s$-categories of perfect complexes as opposed
to quasi-coherent sheaves or ind-perfect complexes. The problem with working with small $\s$-categories of
perfect complexes is of course that the monads involved, as the LSym monad does not preserve
perfect complexes globally. 

One possible approach is to consider the LSym monads as graded monads, as
done in $\S 2.1$ for the construction of the $\Theta$-structure in ind-perfect complexes. 
Alternatively, and perhaps more satisfactorily, 
one can develop a theory of $\Theta$-categories in terms of symmetric 
monoidal $\s$-categories equipped with additional symmetric power operations.
Since the derived symmetric powers $\LSym^n$ preserve perfect complexes, 
such a theory would also allow to view $\Parf(X)$ as a $\Theta$-category.
We believe that 
such an approach, allowing for $\Theta$-categories to be small $\s$-categories could reduce the imperfection
of Theorem \ref{t1}. This will be investigated elsewhere.
\end{rmk}

\appendix

\section{Positive tor-dimension descent}

Let $T$ be a stable presentable symmetric monoidal $\s$-category endowed with 
a non-degenerate and multiplicative t-structure. For a morphism of commutative algebras
$u \colon \mathbf{1} \to B$ in $T$, we can consider the co-nerve of $u$, which is a coaugmented 
cosimplicial object
$$CN(u)^* \colon \Delta_+ \longrightarrow \CAlg(T)$$
sending $n\geq -1$ to $B^{\otimes n+1}$. 
Tensoring along $\mathbf{1} \to CN(u)^*$ yields a colimit preserving $\s$-functor
$$\psi \colon T=\mathbf{1}\text{-}\Mod(T) \to \lim_{n\in \Delta}(CN(u)^n\text{-}\Mod(T)).$$
The right adjoint to this $\s$-functor is obtained as the composition
$$\xymatrix{
\lim_{n\in \Delta}\big(CN(u)^n\text{-}\Mod(T)\big) \ar[r] &  T^{\Delta} \ar[r]^-{\lim} & T}$$
where the first $\s$-functor is the levelwise forgetful $\s$-functor to $T$.

We now assume that $B$ has positive tor-dimension. That is, the $\s$-functor $M \mapsto B\otimes M$
preserves the sub-$\s$-category $T^{[0,\s[} \subset T$ of coconnective objects with respect to the
$t$-structure. We will denote by $T^{>-\s}$ the full sub-$\s$-category
of eventually coconnective objects, that is, the union of sub-$\s$-categories
$$\bigcup_{a <0} T^{[a,\s[} \subset T.$$
Similarly, for $B \in \CAlg(T)$ we denote by 
$B\text{-}\Mod(T)^{>-\s} \subset B\text{-}\Mod(T)$ the full sub-$\s$-category
of modules whose underlying objects lie in $T^{>-\s}$.

\begin{prop}\label{pa}
Assume that $B \in \CAlg(T)$ is of positive tor-dimension and that 
$B\otimes - \colon T^{>-\s} \to T^{>-\s}$ is conservative. Then the $\s$-functor
$$\psi \colon T^{>-\s} \to  \lim_{n\in \Delta}\big(CN(u)^n\text{-}\Mod(T)^{>-\s}\big)$$
is an equivalence of $\s$-categories.
\end{prop}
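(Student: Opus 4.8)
The plan is to recognize the statement as a form of faithfully flat descent, adapted to the eventually coconnective setting, and to prove it by a Beck-type / monadicity argument combined with a careful bookkeeping of the $t$-structure. First I would note that $\psi$ has an explicit right adjoint $\phi$ given by taking the limit of the cosimplicial object obtained by forgetting module structures, as described just before the statement. The goal is then to show that the unit $\mathrm{id}\Rightarrow \phi\psi$ and the counit $\psi\phi\Rightarrow\mathrm{id}$ are equivalences when restricted to eventually coconnective objects. For the unit, given $M\in T^{>-\infty}$ one must show that the natural map $M\to \lim_{n\in\Delta}\big(B^{\otimes n+1}\otimes M\big)$ is an equivalence in $T$; this is the classical ``$B$ is of effective descent on coconnective objects'' statement, and I would prove it by the usual extra-degeneracy / split-augmented-simplicial-object trick after base change to $B$. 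Concretely, the cosimplicial object $n\mapsto B^{\otimes n+1}\otimes M$ becomes split once we apply $B\otimes -$, so its totalization is computed; one then uses that $B\otimes -$ is conservative on $T^{>-\infty}$ together with the fact that the relevant $\mathrm{Tot}$ can be checked after tensoring with $B$, since the positive tor-dimension hypothesis guarantees that all the terms, and hence (by a connectivity/convergence estimate) the totalization, stay in a fixed $T^{[a,\infty[}$.

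The key technical point, and the one I expect to be the main obstacle, is the convergence of the relevant cosimplicial (Tot) limits within $T^{>-\infty}$. A priori $\lim_{n\in\Delta}$ of a cosimplicial object in a stable $\infty$-category is a completed/pro-like object and need not preserve coconnectivity bounds; here the positive tor-dimension of $B$ is exactly what saves us: each $B^{\otimes n+1}\otimes M$ lies in $T^{[a,\infty[}$ if $M$ does (using multiplicativity of the $t$-structure and that $B\otimes-$ preserves $T^{[0,\infty[}$), and a standard estimate on the connectivity of the partial totalizations $\mathrm{Tot}_{\leq m}$ shows the tower stabilizes in each truncation, so $\mathrm{Tot}$ is again in $T^{[a,\infty[}$ and the limit is computed ``naively''. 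I would isolate this as a sub-lemma: for a cosimplicial object $X^\bullet$ in $T$ with $X^n\in T^{[a,\infty[}$ for all $n$, one has $\lim X^\bullet \in T^{[a,\infty[}$ and the formation of this limit commutes with the $t$-truncations in an appropriate range. The non-degeneracy of the $t$-structure is used to conclude that a map in $T^{>-\infty}$ which is an equivalence after all truncations $\tau_{\leq N}$ (equivalently, after $B\otimes-$ on each truncation) is an equivalence.

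For the counit, I would argue that a compatible family $(M^n)_{n\in\Delta}$ with $M^n\in CN(u)^n\text{-}\Mod(T)^{>-\infty}$ and satisfying the cartesianness (cocartesian over the face maps up to base change) is automatically of the form $\phi$ applied to its value, again by the split-simplicial-object argument after base change to $B$: the descent datum trivializes over $B$, and conservativity of $B\otimes-$ on $T^{>-\infty}$ promotes this to a trivialization over $\mathbf 1$. Equivalently, one invokes the Barr--Beck--Lurie theorem for the comonad (or monad) associated to the adjunction $(\psi,\phi)$: $\phi$ is conservative on $T^{>-\infty}$ because its composite with $B\otimes-$ is, and it preserves the relevant limits; the only remaining input is again the coconnectivity stability above, which ensures one stays inside $T^{>-\infty}$ throughout. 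Assembling these, $(\psi,\phi)$ restricts to an adjoint equivalence between $T^{>-\infty}$ and $\lim_{n\in\Delta}\big(CN(u)^n\text{-}\Mod(T)^{>-\infty}\big)$, which is the claim.
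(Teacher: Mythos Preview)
Your proposal is correct and follows essentially the same strategy as the paper: reduce to the augmented case by base-changing along $\mathbf{1}\to B$, using that (i) in the augmented case the co-nerve is split so descent is automatic, and (ii) the positive tor-dimension hypothesis ensures that $B\otimes-$ commutes with totalizations of cosimplicial objects uniformly bounded below for the $t$-structure, so conservativity of $B\otimes-$ on $T^{>-\infty}$ lets one check the result after applying $B\otimes-$. The paper's write-up is terser---it packages both the unit and counit checks into the single observation that one ``can base change to $B$''---whereas you spell out the unit and counit separately and mention Barr--Beck--Lurie as an alternative organization, but the key sub-lemma (your convergence estimate, the paper's statement that $B\otimes\lim_n x_n\to\lim_n(B\otimes x_n)$ is an equivalence for $x_*$ in a fixed $T^{[a,\infty[}$, proved by comparing with $\lim_{n\leq k}$) is identical.
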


\begin{proof}
Let us first assume that $B$ is augmented via a morphism of
commutative algebras $B \to \mathbf{1}$. In this case, the constant diagram 
$\mathbf{1}$ becomes a homotopy retract of the cosimplicial diagram $CN(u)^*$. The augmentation
induces a retraction $CN(u)^* \to \mathbf{1}$, and the composition
$CN(u)^* \to \mathbf{1} \to CN(u)^*$ is simplicially homotopic to the identity as a morphism 
of cosimplicial objects in $\CAlg(T)$. This implies that 
the cosimplicial diagram of categories $CN(u)^*\text{-}\Mod(T)$ is homotopy equivalent to the constant 
diagram $T$, and thus that we have
$$T \simeq \lim_n CN(u)^n\text{-}\Mod(T).$$
In this special case, the proposition holds even without the restriction to $T^{>-\s}$ and without 
the positive tor-dimension condition, and without any need of a t-structure: it remains true
for any augmented commutative algebra $B$ in a stable presentable symmetric monoidal $\s$-category.

Let us now treat the general case. We first observe that $B \otimes -$ commutes with
limits of diagrams $x_* \colon \Delta \to T^{[a,\s[}$, that is, limits of uniformly bounded below
cosimplicial diagrams. Indeed, in this case, for all $i$ there is a $k$, depending on $a$ and $i$ only, 
such that 
$$\lim_n(x_n) \to \lim_{n\leq k}(x_n)$$
induces an equivalence on $\tau^{\geq i}$. As $B$ is of positive tor-dimension the same
holds for the limit $\lim(B\otimes x_n)$ instead. Because $B\otimes-$ commutes with finite limits, 
we see that the canonical morphism
$$B\otimes \lim_n(x_n) \to \lim_n(B\otimes x_n)$$
is an equivalence on all truncations $\tau^{\geq i}$ for all $i$, and thus must be an equivalence in $T$.

Because $B\otimes -$ is conservative and commutes with the involved limits, 
we can base change to $B$ to prove this last statement. After base change the statement is true
as it is related to the co-nerve of $B \to B\otimes B$ inside $B\text{-}\Mod(T)$, which is augmented via
the multiplication map $B\otimes B \to B$.
\end{proof}

\bibliographystyle{alpha}
\bibliography{Biblio.bib}

\noindent Bertrand Toën, {\sc IMT, CNRS, Université de Toulouse, Toulouse (France)}\\
Bertrand.Toen@math.univ-toulouse.fr \\

\noindent Joost Nuiten, {\sc IMT,  Université de Toulouse, Toulouse (France)}\\
Joost.Nuiten@math.univ-toulouse.fr \\

\end{document}